\theoremstyle{plain}
\newtheorem{theorem}{Theorem}[section]
\newtheorem{corollary}[theorem]{Corollary}
\newtheorem{proposition}[theorem]{Proposition}
\newtheorem{lemma}[theorem]{Lemma}
\newtheorem{definition}{Definition}[section]  
\newtheorem{remark}[theorem]{Remark}
\newtheorem{example}[theorem]{Example}
\def\N{\mathbb{N}}
\def\ad{\mathrm{ad}}
\def\der{\mathrm{Der}}
\def\nil{\mathrm{nil}}
\title[The nilpotent Graph]{The Nilpotent Graph of a finite-dimensional Lie Algebra}
\thanks{...}
\author{David Towers}
 \address{Lancaster University, Department of Mathematics and Statistics, 
 }
  \email{d.towers@lancaster.ac.uk
 }
 \author{Ismael Gutierrez}
 \address{Universidad del Norte,  Departamento de Matem\'aticas y Estad\'istica, Km 5 via a Puerto Colombia, Barranquilla, Colombia.}
  \email{isgutier@uninorte.edu.co
 }
\author{Luis Fernandez}
\address{Universidad del Norte,  Departamento de Matem\'aticas y Estad\'istica, Km 5 via a Puerto Colombia, Barranquilla, Colombia.}
   \email{lfernandeze@uninorte.edu.co}
\subjclass[2010]{17B30, 17B45, 05C40, 05C25}
\keywords{Graphs associated with Lie Algebras; nilpotent Lie Algebra; hypercenter of a Lie Algebra; nilpotentizer; strongly self-centralizing Lie subalgebras}
\begin{document}

\begin{abstract}
Let $L$ be a finite-dimensional Lie algebra over a field $F$. In This paper we introduce the \emph{nilpotent graph} $\Gamma_\mathfrak{N}(L)$ as the graph whose vertices are the elements of $L \setminus \nil(L)$, where
\[\nil(L) = \{x \in L \mid \langle x, y \rangle \text{ is nilpotent for all } y \in L\},\]
and where two vertices $x, y$ are adjacent if the Lie subalgebra they generate is nilpotent. We give some characterizations of $\nil(L)$ and its connection with the hypercenter $Z^*(L)$, for example, they are equal when $F$ has characteristic zero. We prove that the nilpotentizer behaves well under direct sums, allowing a decomposition of $\Gamma_\mathfrak{N}(L)$ between components. The paper also investigates the structural and combinatorial properties of $\Gamma_\mathfrak{N}(L)$, including the conditions under which the graph is connected. We characterize the existence of strongly self-centralizing subalgebras in relation to connectivity and vertex isolation. Explicit computations are carried out for the algebra $\mathfrak{t}(2,\mathbb{F}_q)$, where $\Gamma_\mathfrak{N}(L)$ decomposes into $q+1$ components, each of size $q(q-1)$, forming a $(q^2-q-1)$-regular graph.
We conclude with algorithms for constructing $\Gamma_\mathfrak{N}(L)$ in SageMath, and pose open problems concerning bipartiteness, regularity, and structural implications in higher dimensions over finite fields.
\end{abstract}

\maketitle


\section{Introduction}
 
Nilpotent Lie algebras serve as a unifying framework in mathematics, connecting diverse areas such as representation theory, differential geometry, harmonic analysis, geometric quantization, and algebraic topology. Their study reveals deep connections between different mathematical disciplines and provides powerful tools for investigating the geometric and algebraic structures underlying these areas.

The interaction between algebraic structures and graph theory has been a fruitful study area, providing a robust framework for visualizing and analyzing algebraic objects. Several graph constructions associated with groups and rings have been introduced in recent years, yielding new insights into their structure and properties. Motivated by this, we investigate the \emph{nilpotent graph} of a finite-dimensional Lie algebra.

Given a Lie algebra $L$ over a field $F$, we define its nilpotent graph, denoted $\Gamma_\mathfrak{N}(L)$, as the graph whose vertex set is $L \setminus \nil(L)$, where $\nil(L)$ is the set of elements $x \in L$ such that $\langle x,y \rangle$ is nilpotent for every $y \in L$. Two vertices $x$ and $y$ are adjacent in $\Gamma_\mathfrak{N}(L)$ if the subalgebra they generate is nilpotent. This construction parallels related work on non-commuting graphs of groups, nilpotent graphs of groups and zero-divisor graphs of rings and captures the behavior of nilpotent substructures in a Lie-theoretic context, \cite{Gutierrez}.

In this paper, we explore the fundamental properties of $\Gamma_\mathfrak{N}(L)$, focusing on how its structure reflects that of the underlying Lie algebra. We show that $\nil(L)$ contains the hypercenter $Z^*(L)$ and that the equality holds in characteristic zero. We also demonstrate that $\Gamma_\mathfrak{N}(L)$ respects direct sums and describe the nilpotent graph of the algebra of $2 \times 2$ upper triangular matrices over a finite field. Further, we characterize conditions for connectedness, vertex isolation, regularity and study the existence of strongly self-centralizing Lie subalgebras. Finally, we provide SageMath implementations for constructing $\Gamma_\mathfrak{N}(L)$ and pose open questions for future research.

\section{Preliminaries}

\subsection*{Basics on Graph Theory.}
We recall some basic notions from graph theory that will be used throughout the paper. A \emph{graph} $\Gamma$ is an ordered pair $(V,E)$, where $V$ is a set of vertices and $E$ is a set of unordered pairs of distinct elements of $V$, called edges. If $\{u,v\} \in E$, we say that $u$ and $v$ are \emph{adjacent}, or \emph{neighbors}.

A graph is said to be \emph{simple} if it has no loops and no multiple edges, which will always be assumed in this work. The \emph{degree} of a vertex $v$, denoted $\deg(v)$, is the number of vertices adjacent to $v$. A graph is called \emph{$k$-regular} if all of its vertices have degree $k$, that is, if $\deg(v) = k$ for every vertex $v$ in the graph.

A subset $C \subseteq V$ is a \emph{clique} if every pair of distinct vertices in $C$ are adjacent. A graph is \emph{complete} if every pair of distinct vertices is adjacent; the complete graph on the $n$ vertices is denoted $K_n$. A graph is \emph{connected} if there is a path between any two vertices. A \emph{connected component} of a graph is a maximal connected subgraph.
Some useful notations are the following: $\kappa(\Gamma)$ denotes the number of connected components of $\Gamma$, $\mathcal{C}(\Gamma) :=\{C_1,\ldots, C_{\kappa(\Gamma)}\}$ is the set of all connected components of $\Gamma$, and finally, $\kappa_j(\Gamma) := |C_j|$, \ for $j=1,\ldots \kappa(\Gamma)$.

We denote by $\Gamma^c$ the \emph{complement} of a graph $\Gamma$, defined on the same set of vertex $V$ with edges $\{u,v\}$ such that $u$ and $v$ are not adjacent in $\Gamma$. The study of $\Gamma_\mathfrak{N}(L)$ and its complement $\Gamma_\mathfrak{N}(L)^c$ provides a rich framework to explore the combinatorial features of Lie algebraic nilpotency. The reader is referred for undefined terms and concepts of graph theory to \cite{Diestel} or \cite{Nora}.

\subsection*{Basics on Lie Algebras.}
Let $L$ be a finite-dimensional Lie algebra over a field $F$. A \emph{Lie algebra} is a vector space $L$ over $F$, equipped with a bilinear operation $[\cdot,\cdot]: L \times L \to L$, called the \emph{Lie bracket}, satisfying the following axioms:
\begin{enumerate}
    \item \textbf{Alternating property:} $[x,x] = 0$ for all $x \in L$;
    \item \textbf{Jacobi identity:} $[x,[y,z]] + [y,[z,x]] + [z,[x,y]] = 0$ for all $x,y,z \in L$.
\end{enumerate}
These imply \textbf{anticommutativity}: $[x,y] = -[y,x]$ for all $x,y \in L$.

A Lie algebra is said to be \emph{abelian} if $[x,y] = 0$ for all $x,y \in L$. Every one-dimensional Lie algebra is abelian. The \emph{dimension} of a Lie algebra is its dimension as a vector space over $F$.

Standard examples include:
\begin{itemize}
    \item $\mathfrak{gl}(n,F)$, the Lie algebra of all $n \times n$ matrices over $F$ with bracket $[x,y] = xy - yx$;
    \item $\mathfrak{t}(n,F)$, the subalgebra of upper triangular matrices;
    \item $\mathfrak{u}(n,F)$, the subalgebra of strictly upper triangular matrices.
\end{itemize}

A \emph{subalgebra} of $L$ is a subspace closed under the Lie bracket. If $A, B$ are subspaces of $L$, then $[A,B]$ denotes the subspace spanned by all $[a,b]$ with $a \in A$, $b \in B$. The following properties hold:
\begin{enumerate}
    \item $[A,B] = [B,A]$;
    \item $[A,B+C] = [A,B] + [A,C]$;
    \item $[A,[B,C]] \subseteq [[A,C],B] + [[B,C],A]$;
    \item $[A \cap B, C] \subseteq [A,C] \cap [B,C]$.
\end{enumerate}

A subspace $I$ of $L$ is called an \emph{ideal} if $[x,y] \in I$ for all $x \in I$, $y \in L$, that is, $[I,L] \subseteq I$. Intersections, sums, and Lie products of ideals are again ideals.

The \emph{derived series} of $L$ is defined recursively by
\begin{align*}
    L^{(0)} &= L, \\
    L^{(k+1)} &= [L^{(k)}, L^{(k)}].
\end{align*}
The \emph{lower central series} is given by
\begin{align*}
    L^1 &= L, \\
    L^{k+1} &= [L^k, L].
\end{align*}
A Lie algebra $L$ is called \emph{solvable} if $L^{(r)} = 0$ for some $r$, and \emph{nilpotent} if $L^k = 0$ for some $k$. Every nilpotent Lie algebra is solvable, but the converse is not true.

Examples:
\begin{itemize}
    \item $\mathfrak{u}(n,F)$ is nilpotent;
    \item $\mathfrak{t}(n,F)$ is solvable but not nilpotent;
    \item the Lie algebra with basis $\{x,y\}$ and bracket $[x,y] = y$ is solvable but not nilpotent. In fact, $[L',L'] = [F y, F y] =0$, and $Z(L)=0$.
\end{itemize}

For $x \in L$, define the \emph{adjoint map} $\ad\, x : L \to L$ by $\ad\, x(y) = [y,x]$. The map $\ad: L\longrightarrow \der(L)$, where $\der(L)$ is the set of derivations of $L$, is called the \emph{adjoint representation} of $L$. An ideal $I$ of $L$ is called \emph{characteristic} if it is invariant under all derivations of $L$.

The \emph{center} of $L$ is $Z(L) = \{x \in L \mid [x,L] = 0\}$. Define the upper central series recursively as
\begin{align*}
    Z_0(L) &= 0, \\
    Z_i(L)/Z_{i-1}(L) &= Z(L/Z_{i-1}(L)), \quad i \geq 1.
\end{align*}
The union $\bigcup_{i \geq 0} Z_i(L)$ is called the \emph{hypercentre}, denoted $Z^*(L)$. For finite-dimensional $L$, this series terminates.

Further definitions:
\begin{itemize}
    \item $L$ is \emph{simple} if it is nonabelian and has no nontrivial proper ideals;
    \item $L$ is \emph{semisimple} if it is a direct sum of simple Lie algebras;
    \item $L$ is \emph{almost algebraic} (char 0) if it contains the semisimple and nilpotent parts of all its elements;
    \item The \emph{nilradical} $N(L)$ is the largest nilpotent ideal of $L$;
    \item The \emph{radical} $R(L)$ is the largest solvable ideal of $L$;
    \item $L$ is \emph{completely solvable} if $L^2$ is nilpotent;
    \item $L$ is \emph{reductive} if its adjoint representation is completely reducible.
\end{itemize}

\begin{lemma}\label{z} 
Let $A$ be an ideal of the nilpotent Lie algebra $L$. Then $A\cap Z(L)\neq 0$.
\end{lemma}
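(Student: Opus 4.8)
The plan is to exploit the nilpotency of $L$ by iterating the adjoint action of $L$ on the ideal $A$, producing a descending chain of subspaces of $A$ that is forced to terminate, and then to identify its last nonzero term as a nonzero subspace of $A$ lying inside $Z(L)$. Throughout I assume $A\neq 0$, since otherwise the statement is vacuous.

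First I would set $A_0=A$ and define recursively $A_{i+1}=[A_i,L]$. Because $A$ is an ideal we have $[A,L]\subseteq A$, and a routine induction then yields $A_{i+1}=[A_i,L]\subseteq A_i\subseteq A$, so the $A_i$ form a descending chain $A=A_0\supseteq A_1\supseteq A_2\supseteq\cdots$ of subspaces of $A$. Next I would compare this chain with the lower central series: since $A_0=A\subseteq L=L^1$, an induction using $A_{i+1}=[A_i,L]\subseteq[L^{i+1},L]=L^{i+2}$ gives $A_i\subseteq L^{i+1}$ for all $i$. As $L$ is nilpotent, $L^k=0$ for some $k$, whence $A_{k-1}\subseteq L^k=0$ and the chain reaches $0$.

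Finally, let $m$ be the largest index with $A_m\neq 0$; such an $m$ exists because $A_0=A\neq 0$ while the chain eventually vanishes. Then $A_{m+1}=[A_m,L]=0$, which says precisely that every element of $A_m$ commutes with all of $L$, i.e. $A_m\subseteq Z(L)$. Combining this with $A_m\subseteq A$ gives $0\neq A_m\subseteq A\cap Z(L)$, establishing the claim.

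The only genuinely non-bookkeeping point is verifying the two nested containments, namely $A_{i+1}\subseteq A_i$ (which uses the ideal property together with anticommutativity) and $A_i\subseteq L^{i+1}$ (which drives the termination); once these inductions are in place, both the vanishing of the chain and the identification of $A_m$ with a central subspace are immediate. I note that this argument is essentially the elementary counterpart of invoking Engel's theorem for the nilpotent family $\{\ad\,x\mid x\in L\}$ acting on $A$, but it avoids that machinery entirely.
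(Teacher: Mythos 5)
Your proof is correct and follows essentially the same route as the paper: both arguments use nilpotency of $L$ to force a descending chain to vanish and then identify the last nonzero term as a nonzero subspace of $A \cap Z(L)$. The only cosmetic difference is that you iterate the bracket on $A$ (taking $A_{i+1}=[A_i,L]$), whereas the paper intersects $A$ with the lower central series, choosing $k$ with $L^k\cap A\neq 0$ but $L^{k+1}\cap A=0$.
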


\begin{proof}  
Since $L$ is nilpotent, there exists $k\geq 1$ such that $L^{k+1}\cap A=0$ but $L^k\cap A\neq 0$. Then $0\neq L^k\cap A\subseteq A\cap Z(L)$.  
\end{proof}

The radical of $L$, denoted $R(L)$, is the largest solvable ideal of $L$.

Let $x\in L$. Put $$E_L(x)=\{y\in L \mid (\ad\, x)^n(y)=0 \hbox{ for some } n\in \N\}.$$ Then $E_L(x)$ is a subalgebra of $L$ (see \cite{barnes}). We say that $y$ is an\emph{Engel element} of $L$ if $y\in E_L(x)$ for all $x\in L$. Note that this differs from the terminology used by some authors, but is aligned with the use in group theory. Let $E(L)$ be the set of all Engel elements of $L$. 

Clearly, $E(L)$ is always a subalgebra of $L$. By Engel's theorem it is nilpotent. We also have

\begin{theorem} 
Over a field of characteristic zero, $E(L)$ is a characteristic ideal of $L$, and so $E(L)\subseteq N(L)$, the nilradical of $L$.
\end{theorem}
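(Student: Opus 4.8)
The plan is to reduce the whole statement to the single assertion that $E(L)$ is stable under every derivation of $L$. This suffices: the inner derivations $\ad z$ ($z\in L$) lie in $\der(L)$, so derivation-stability already gives $[E(L),L]\subseteq E(L)$, i.e.\ $E(L)$ is an ideal, and a derivation-stable ideal is by definition characteristic; moreover $E(L)$ is nilpotent by Engel's theorem, so once it is an ideal it must sit inside the largest nilpotent ideal, yielding $E(L)\subseteq N(L)$. Thus the entire theorem follows from derivation-stability, and this is the only point at which characteristic zero will enter.

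The first step is elementary. Writing $E(L)=\bigcap_{x\in L}E_L(x)$, I would observe that for any automorphism $\phi$ of $L$ one has $\phi\circ\ad x\circ\phi^{-1}=\ad(\phi x)$, hence $\phi\big(\ker(\ad x)^{n}\big)=\ker(\ad(\phi x))^{n}$ and therefore $\phi(E_L(x))=E_L(\phi x)$. Intersecting over all $x$ and using that $x\mapsto\phi x$ permutes $L$ gives $\phi(E(L))=E(L)$. So $E(L)$ is invariant under the full group $\mathrm{Aut}(L)$.

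The main step --- and the genuine obstacle --- is to upgrade invariance under $\mathrm{Aut}(L)$ to invariance under $\der(L)$, and this is exactly where characteristic zero is indispensable. I would invoke the standard fact that in characteristic zero $\mathrm{Aut}(L)$ is a linear algebraic group whose Lie algebra is precisely $\der(L)$: a map $1+\varepsilon X$ respects the bracket modulo $\varepsilon^{2}$ if and only if $X\in\der(L)$, so the derivations are exactly the tangent vectors to $\mathrm{Aut}(L)$ at the identity. A subspace stabilised by the identity component of an algebraic group is stabilised by its Lie algebra (differentiate the orbit map), and $E(L)$, being stabilised by all of $\mathrm{Aut}(L)$ and in particular by $\mathrm{Aut}(L)^{\circ}$, is therefore stabilised by $\der(L)$. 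The analogue fails in positive characteristic, which is precisely why the hypothesis is imposed.

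Finally, as an independent and more hands-on check on the containment $E(L)\subseteq N(L)$, I would argue that every $y\in E(L)$ is $\ad$-nilpotent. Since $E_L(x)=\ker(\ad x)^{\dim L}$, the set $E(L)$ is cut out by the polynomial identities $(\ad x)^{\dim L}y=0$ in $x$; as $F$ is infinite these persist after extension to $\overline{F}$, so I may assume $F$ algebraically closed. If $\ad y$ had a nonzero eigenvalue $\lambda$ with eigenvector $v$, then $[v,y]=\lambda v$ with $y,v$ independent, so $\langle y,v\rangle$ is the two-dimensional non-abelian solvable algebra, in which a direct computation shows $y$ is not an Engel element --- contradicting $y\in E(L)$. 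Hence $\ad y$ is nilpotent for every $y\in E(L)$. Passing to the semisimple quotient $L/R(L)$ (quotients send Engel elements to Engel elements) and noting that an Engel element there lies in a Cartan subalgebra yet is $\ad$-nilpotent, hence $\ad$-semisimple and $\ad$-nilpotent at once, forces it to be central and so zero; thus $E(L)\subseteq R(L)$, and in characteristic zero the $\ad$-nilpotent elements of $R(L)$ are exactly $N(L)$, recovering $E(L)\subseteq N(L)$.
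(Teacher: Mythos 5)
Your proof is correct, and its skeleton is the same as the paper's: show by the elementary conjugation identity that $E(L)$ is invariant under $\mathrm{Aut}(L)$, upgrade this to invariance under $\der(L)$ (the only place characteristic zero is used), and conclude that $E(L)$ is a characteristic ideal which, being nilpotent by Engel's theorem, must lie in $N(L)$. The difference lies entirely in the middle step: the paper disposes of it with a citation to \cite[Corollary 3.2]{frat}, whereas you prove it, by identifying $\der(L)$ with the Lie algebra of the algebraic group $\mathrm{Aut}(L)$ (valid in characteristic zero by smoothness) and differentiating the action on the invariant subspace. This buys self-containedness, but one point needs care: your conjugation computation gives invariance of $E(L)$ only under the group of $F$-rational automorphisms, while differentiating the orbit map requires invariance under the algebraic group over $\overline{F}$, or at least under a Zariski-dense subgroup of its identity component. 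The repair is standard --- either invoke density of rational points of connected groups in characteristic zero, or use your own polynomial-identity observation, which shows $E(L\otimes_F\overline{F})\cap L = E(L)$, so that the whole differentiation argument can be run over $\overline{F}$ and intersected back with $L$. Finally, your closing paragraph is a genuine addition with no counterpart in the paper: a citation-free proof of the containment $E(L)\subseteq N(L)$ (though not of the ideal property) via $\ad$-nilpotency of Engel elements, Cartan subalgebras of $L/R(L)$, and the characterization of $N(L)$ as the $\ad$-nilpotent part of the radical in characteristic zero; it trades the reliance on \cite{frat} for reliance on Levi--Cartan structure theory.
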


\begin{proof} 
Let $\theta$ be an automorphism of $L$. If $y\in E_L(x)$, then $\theta(y)\in E_L(\theta(x))$, so $E(L)$ is invariant under all automorhisms of $L$. It follows from \cite[Corollary 3.2]{frat} that it is invariant under all derivations of $L$ and hence is a characteristic ideal of $L$.
\end{proof}

\section{The nilpotentizer of a Lie algebra}
 
In this paper, $\mathfrak{N}$ and $\mathfrak{N}_c$ denote the class of all finite-dimensional nilpotent Lie algebras and the class of all finite-dimensional nilpotent Lie algebras of the nilpotency class $c$, respectively.

\begin{definition}
Let $L$ be a finite-dimensional Lie algebra over a field $F$. 
\begin{enumerate}
    \item For $h\in L$ we set 
\begin{equation*}
\nil_L(h) := \{x\in L\mid  \langle h,x\rangle \ \text{is a nilpotent subalgebra of} \  L \}.
\end{equation*}
This set is called the \emph{nilpotentiser} of $h$ in $L$.

\item We define the nilpotentizer of $L$ as follows:
\begin{equation*}
\nil(L) := \{x\in L\mid  \langle h,x\rangle \ \text{is nilpotent for all} \ h\in L\}.
\end{equation*}
\end{enumerate}
\end{definition}


\subsection*{Properties of nilpotentisers}
\bigskip

If $x\in L$, then $\nil_L(x)$ is the union of all maximal nilpotent subalgebras containing $x$.

\begin{lemma} 
For any Lie algebra $L$ and any $x\in L$, the following are equivalent:
\begin{itemize}
\item[(i)]  $\nil_L(x)$ is a subspace of $L$;
\item[(ii)]  $ \nil_L(x)$  is a maximal nilpotent subalgebra of $L$;
\item[(iii)]  $x$ is contained in just one maximal nilpotent subalgebra.
\end{itemize}
\end{lemma}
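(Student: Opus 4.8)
My plan is to build everything on the observation recorded just before the statement, that $\nil_L(x)$ is the union of all maximal nilpotent subalgebras of $L$ containing $x$; write $\nil_L(x)=\bigcup_{i\in I}M_i$ for these subalgebras, noting $I\neq\emptyset$ since $\langle x,x\rangle=Fx$ is abelian, so $x\in\nil_L(x)$. Three of the implications are then immediate. For (ii)$\Rightarrow$(i) a subalgebra is in particular a subspace. For (iii)$\Rightarrow$(ii), if $x$ lies in a unique maximal nilpotent subalgebra $M$ the union collapses to $\nil_L(x)=M$, which is maximal nilpotent. For (ii)$\Rightarrow$(iii), if $\nil_L(x)=M$ is maximal nilpotent then any maximal nilpotent $M'\ni x$ satisfies $M'\subseteq\nil_L(x)=M$, so $M'=M$ by maximality. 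Thus (ii)$\Leftrightarrow$(iii), and both follow once we establish the single implication (i)$\Rightarrow$(ii).

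Before attacking that implication I would record two reductions. First, $\nil_L(x)$ is automatically closed under scalars, since $\langle x,\lambda y\rangle=\langle x,y\rangle$ for $\lambda\neq 0$ and $0\in\nil_L(x)$; hence hypothesis (i) is precisely the assertion that $\nil_L(x)$ is closed under addition. Second, for each $y\in\nil_L(x)$ the whole subalgebra $\langle x,y\rangle$ lies in $\nil_L(x)$: any iterated bracket $w\in\langle x,y\rangle$ sits in the nilpotent algebra $\langle x,y\rangle$ alongside $x$, so $\langle x,w\rangle\subseteq\langle x,y\rangle$ is nilpotent. In particular $\nil_L(x)$ is $\ad x$-invariant and, being spanned by elements killed by a power of $\ad x$, the restriction $\ad x\mid_{\nil_L(x)}$ is nilpotent.

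The entire difficulty is then concentrated in (i)$\Rightarrow$(ii): one must show that additive closure of $S:=\nil_L(x)$ forces $S$ to be a \emph{nilpotent} subalgebra, for then $S$ contains, hence equals, every maximal nilpotent subalgebra through $x$. The natural route is to prove $S$ is a subalgebra and then invoke Engel's theorem. Closure under bracket is the obstacle: for $a,b\in S$ the elements $a+\lambda b$ all generate nilpotent algebras with $x$, but $[a,b]$ couples the two distinct subalgebras $\langle x,a\rangle$ and $\langle x,b\rangle$, and nothing formal guarantees $\langle x,[a,b]\rangle$ is nilpotent. I would attempt a linearisation, using that $\langle x,a+\lambda b\rangle$ is nilpotent for all $\lambda$ simultaneously, to control the brackets $(\ad(a+\lambda b))^k x$ as polynomials in $\lambda$ and so extract nilpotency of $\langle x,[a,b]\rangle$.

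I expect this last step to be not merely technical but genuinely problematic, because the implication appears to fail without an extra hypothesis. Consider $L=R\oplus R$, where $R=\langle e,f\mid[e,f]=f\rangle$ is the non-abelian two-dimensional algebra, and let $x$ span the one-dimensional derived algebra $[R,R]$ of the second summand. Then $\ad x$ annihilates the first summand, while inside the second summand $\nil_R(x)=Fx$; a direct check gives $\nil_L(x)=R\oplus Fx$, which is a subspace and even a subalgebra, yet it is not nilpotent since it contains the whole non-abelian factor $R$. Here $x\notin\nil(L)$ and (i) holds while (ii) and (iii) fail. Consequently I would not expect to prove (i)$\Rightarrow$(ii) exactly as stated; the real task is to locate the missing hypothesis --- plausibly one preventing a non-nilpotent subalgebra from commuting with $x$ inside $\nil_L(x)$, or a restriction on $L$ --- under which the linearisation can be completed. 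Pinning down and justifying that hypothesis is, in my view, the crux of the lemma.
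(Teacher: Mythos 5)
Your handling of the three easy implications matches the paper's implicit route exactly: both rest on the observation that $\nil_L(x)$ is the union of all maximal nilpotent subalgebras containing $x$, from which (ii)$\Rightarrow$(i) is trivial, (iii)$\Rightarrow$(ii) collapses the union, and (ii)$\Rightarrow$(iii) follows because every maximal nilpotent subalgebra through $x$ is contained in $\nil_L(x)$. The entire content of the lemma is therefore the remaining direction (i)$\Rightarrow$(ii)/(iii), and here you and the paper diverge: the paper's whole proof is the remark that a union of \emph{two} subspaces $U\cup V$ is a subspace only if $U\subseteq V$ or $V\subseteq U$. That remark settles the case where $x$ lies in at most two maximal nilpotent subalgebras, but it says nothing about a union of three or more pairwise incomparable subspaces, which can very well be a subspace. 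You correctly sensed that this is where the difficulty is concentrated, and then did something better than force the argument through: you found a counterexample.

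Your counterexample is correct, and it shows the gap is in the paper's proof rather than in your proposal --- the lemma is false as stated. Explicitly, take $L=R_1\oplus R_2$ with $R_i=Fe_i+Ff_i$, $[e_i,f_i]=f_i$, and $x=(0,f_2)$. For $y=(u,\alpha e_2+\beta f_2)$ one computes $[x,y]=(0,-\alpha f_2)=-\alpha x$, so $\langle x,y\rangle=Fx+Fy$ is abelian when $\alpha=0$ and is the two-dimensional non-abelian (hence non-nilpotent) algebra when $\alpha\neq 0$. Therefore $\nil_L(x)=R_1\oplus Ff_2$: it is a three-dimensional subspace, so (i) holds; it is not nilpotent, since it contains $R_1$, so (ii) fails; and $x$ lies in the two distinct maximal nilpotent subalgebras $Fe_1+Ff_2$ and $Ff_1+Ff_2$, so (iii) fails. (Each of these planes is genuinely maximal nilpotent: any nilpotent subalgebra containing $x$ lies in $\nil_L(x)$, and the only subalgebra of $\nil_L(x)$ properly containing either plane is $\nil_L(x)$ itself, which is not nilpotent.) Indeed $\nil_L(x)$ is here the union of the $|F|+1$ pairwise incomparable planes through the line $Ff_2$ --- precisely the configuration that the paper's two-subspace fact cannot exclude. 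What survives of the lemma is that (ii) and (iii) are equivalent and each implies (i); to get a true equivalence one must strengthen (i), for instance to ``$\nil_L(x)$ is a nilpotent subalgebra of $L$'', which is easily checked to be equivalent to (ii) and (iii) by the same union argument.
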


\begin{proof} 
Simply note that if $U$, $V$ are subspaces of $L$, then $U\cup V$ is a subspace if and only if $U\subseteq V$ or $V\subseteq U$.
\end{proof}

\begin{lemma} Let $L$ be a Lie algebra, $J$ an ideal of $L$ and $x,y\in L$. Then
\begin{itemize}
\item[(i)] $\nil(L)\subseteq \nil_L(x)$;
\item[(ii)] $\frac{\nil_L(x)+J}{J}\subseteq \nil_L(x+J)$;
\item[(iii)] $\nil_{L/J}(x+J)=\frac{\nil_L(x)}{J}$ whenever $J\subseteq Z^*(L)$;
\item[(iv)] the nilpotentiser of every element of $L$ is a subalgebra if and only if the same is true for $L/J$ for some ideal $J$ of $L$ with $J\subseteq Z^*(L)$;
\item[(v)] if $\nil_L(x)$ is a subalgebra of $L$ and $y\in \nil_L(x)$, then $\nil_L(y)=\nil_L(x)$;
\item[(vi)] if $\theta$ is an automorphism of $L$, then $\theta(\nil_L(x))=\nil_L(\theta(x))$.
\end{itemize}
\end{lemma}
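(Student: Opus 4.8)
The plan is to treat the six parts according to difficulty: (i), (ii) and (vi) are direct consequences of the definitions, while (iii)--(v) all rest on two standard facts about the hypercentre that I would isolate first. For (i), the defining condition for $\nil(L)$, namely that $\langle h,z\rangle$ be nilpotent for \emph{all} $h$, specialises at $h=x$ to the defining condition for $\nil_L(x)$, so $\nil(L)\subseteq\nil_L(x)$. For (vi), an automorphism $\theta$ preserves nilpotency and satisfies $\theta(\langle x,z\rangle)=\langle\theta(x),\theta(z)\rangle$; hence $z\in\nil_L(x)\iff\theta(z)\in\nil_L(\theta(x))$, which is the asserted equality. For (ii) I would use that the projection $\pi\colon L\to L/J$ is a homomorphism, so $\langle x+J,z+J\rangle\subseteq\pi(\langle x,z\rangle)$; if $z\in\nil_L(x)$ then $\langle x,z\rangle$ is nilpotent, its image is a nilpotent subalgebra, and therefore $z+J\in\nil_{L/J}(x+J)$.

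The engine for the rest is the pair of facts: (a) for every subalgebra $K\le L$ one has $Z^*(L)\cap K\subseteq Z^*(K)$, proved by induction along the upper central series using $[w,K]\subseteq K$ for $w\in K$; and (b) if $I\subseteq Z^*(K)$ is an ideal with $K/I$ nilpotent, then $K$ is nilpotent, since $K^c\subseteq I\subseteq Z_n(K)$ forces $K^{c+n}=0$. Combining them yields the principle that, whenever $J\subseteq Z^*(L)$ and $S\le L$ has $S/(S\cap J)$ nilpotent, $S$ is itself nilpotent. A first application, needed so that the quotient $\nil_L(x)/J$ in (iii)--(iv) is meaningful, is $J\subseteq\nil_L(x)$: for $j\in J$ the quotient of $\langle x,j\rangle$ by the ideal generated by $j$ is generated by one element, hence abelian, while that ideal lies in $Z^*(\langle x,j\rangle)$ by (a), so $\langle x,j\rangle$ is nilpotent by (b).

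Part (iii) is then a double inclusion. The inclusion $\nil_L(x)/J\subseteq\nil_{L/J}(x+J)$ is (ii) combined with $J\subseteq\nil_L(x)$. For the reverse, take $z+J\in\nil_{L/J}(x+J)$ and set $S=\langle x,z\rangle$; then $S/(S\cap J)\cong(S+J)/J=\langle x+J,z+J\rangle$ is nilpotent, and since $S\cap J\subseteq Z^*(L)\cap S\subseteq Z^*(S)$, the principle above forces $S$ nilpotent, i.e.\ $z\in\nil_L(x)$. This reverse inclusion, powered by the hypercentral-extension fact (b), is the main technical step of the lemma. Part (iv) follows formally: for ($\Rightarrow$) take $J=0$; for ($\Leftarrow$), by (iii) the subalgebra $\nil_{L/J}(x+J)$ equals $\nil_L(x)/J$, and its full preimage under $\pi$ is a subalgebra of $L$ which, because $J\subseteq\nil_L(x)$, is exactly $\nil_L(x)$.

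For (v) I would invoke the preceding lemma: $\nil_L(x)$ being a subalgebra makes it the unique maximal nilpotent subalgebra $M$ containing $x$. As $M$ is nilpotent and $y\in M$, each $\langle y,m\rangle$ with $m\in M$ stays inside $M$ and is nilpotent, so $M\subseteq\nil_L(y)$. The reverse inclusion $\nil_L(y)\subseteq M$ is the hard part: one must show that any maximal nilpotent subalgebra $N$ containing $y$ equals $M$. I would try to run this through the uniqueness clause of the preceding lemma, noting that $x\in\nil_L(y)$ by symmetry of adjacency and that $x$ lies in only the one maximal nilpotent subalgebra $M$. The main obstacle is ruling out a second maximal nilpotent subalgebra through $y$, and this is precisely the place where the hypothesis that $\nil_L(x)$ is a genuine maximal nilpotent subalgebra (and not the whole of $L$) has to be used.
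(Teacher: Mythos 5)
Your parts (i), (ii) and (vi) are correct one-liners, and your treatment of (iii)--(iv) is sound and complete: the two hypercentre facts you isolate (that $Z^*(L)\cap K\subseteq Z^*(K)$ for every subalgebra $K$ of $L$, and that $K$ is nilpotent whenever $K/I$ is nilpotent for an ideal $I\subseteq Z^*(K)$) are both proved by the evident inductions along the central series, and together they give $J\subseteq \nil_L(x)$, both inclusions of (iii), and the saturation $\nil_L(x)+J=\nil_L(x)$ that (iv) actually requires. (Your justification of that last step, ``because $J\subseteq \nil_L(x)$'', is too weak as literally stated, since $\nil_L(x)$ need not be closed under addition; but the reverse inclusion in your proof of (iii) supplies exactly the missing saturation, so this is a presentational slip rather than a gap.) The paper itself proves nothing here --- its proof reads ``These are straightforward'' --- so on (i)--(iv) and (vi) your write-up is strictly more informative than the original.

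The genuine gap is (v), and you have honestly flagged it: you never establish $\nil_L(y)\subseteq\nil_L(x)$, only an intention to. No argument will close this, because (v) is false as stated. Let $L=L_1\oplus L_2$, where each $L_i$ is the two-dimensional nonabelian algebra with basis $\{h_i,e_i\}$ and $[h_i,e_i]=e_i$. By Lemma~\ref{ComponentsNilAdjacency}, $\nil_L\bigl((a,b)\bigr)=\nil_{L_1}(a)\oplus\nil_{L_2}(b)$, and in $L_i$ one checks $\nil_{L_i}(u)=Fu$ for $u\neq 0$, while $\nil_{L_i}(0)=L_i$. So for $x=(h_1,h_2)$ the set $\nil_L(x)=Fh_1\oplus Fh_2$ is an abelian (indeed maximal nilpotent) subalgebra, and $y=(h_1,0)$ is a nonzero element of it with $y\notin\nil(L)=0$; yet $(0,e_2)\in\nil_L(y)$, since $[(h_1,0),(0,e_2)]=0$, whereas $(0,e_2)\notin\nil_L(x)$, since $\langle x,(0,e_2)\rangle$ is two-dimensional nonabelian. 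Hence $\nil_L(y)=Fh_1\oplus L_2\neq\nil_L(x)$. The same example shows why your planned appeal to the preceding lemma could not succeed: $\nil_L(y)$ is a subspace, yet $y$ lies in infinitely many maximal nilpotent subalgebras $Fh_1\oplus Fv$, none containing another --- the ``union of two subspaces'' dichotomy says nothing about infinite unions (nor about finite unions over finite fields), so that lemma is itself false at this level of generality. The obstacle you identified, ruling out a second maximal nilpotent subalgebra through $y$, is therefore insurmountable. Part (v) becomes true precisely when one adds the hypothesis that $y$, too, lies in a unique maximal nilpotent subalgebra (equivalently, that $\nil_L(y)$ is a nilpotent subalgebra): that unique subalgebra must then be $\nil_L(x)$, and the equality is immediate.
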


\begin{proof} 
These are straightforward.
\end{proof}

\begin{lemma} 
For every Lie algebra $L$, $\nil(L)\subseteq E(L)$.
\end{lemma}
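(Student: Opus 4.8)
The plan is to unwind both definitions and connect them through a single subalgebra. Recall that membership in $E(L)$ means: $y\in E(L)$ if and only if for \emph{every} $x\in L$ there is some $n\in\N$ with $(\ad\,x)^n(y)=0$, where $(\ad\,x)(y)=[y,x]$. So I fix an arbitrary $y\in\nil(L)$ and an arbitrary $x\in L$, and I aim to produce such an $n$.

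The crucial observation is that the single subalgebra $S:=\langle x,y\rangle$ is where all the action takes place. Since $y\in\nil(L)$, the definition of $\nil(L)$ gives that $\langle h,y\rangle$ is nilpotent for every $h\in L$; taking $h=x$ shows that $S=\langle x,y\rangle$ is a nilpotent Lie algebra. I would then invoke the standard fact that in any nilpotent Lie algebra every element acts nilpotently under the adjoint representation. Concretely, if $S^{k+1}=0$, then since $x\in S=S^1$ one checks by induction that $(\ad\,x)^{j}(b)\in S^{j+1}$ for every $b\in S$, because $[S^{j},S]\subseteq S^{j+1}$. Hence $(\ad_S\,x)^{k}$ annihilates all of $S$, and in particular $(\ad_S\,x)^{k}(y)=0$.

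The final step is to transfer this from $S$ back to $L$. Because $S$ is a subalgebra, the bracket computed inside $S$ agrees with the bracket in $L$, so $(\ad\,x)^{k}(y)=(\ad_S\,x)^{k}(y)=0$ with $k$ the same integer. This exhibits the required $n=k$, so $y\in E_L(x)$. As $x\in L$ was arbitrary, $y\in E(L)$, and since $y\in\nil(L)$ was arbitrary we conclude $\nil(L)\subseteq E(L)$.

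I do not anticipate a genuine obstacle here; the argument is short and essentially formal. The only point deserving care is the claim that nilpotency of $S$ forces $\ad_S\,x$ to be nilpotent, which relies on the termination of the lower central series rather than on Engel's theorem (the latter goes in the converse direction). It is also worth stating explicitly that the adjoint powers taken in $S$ and in $L$ coincide, since $S$ is closed under the bracket; this is what makes the passage to $L$ legitimate.
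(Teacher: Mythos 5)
Your proof is correct and follows exactly the same route as the paper's: fix $y\in\nil(L)$ and $x\in L$, observe that $\langle x,y\rangle$ is nilpotent by the definition of $\nil(L)$, and conclude $y\in E_L(x)$. The paper leaves the last implication as a one-line assertion, whereas you have (correctly) filled in the detail that nilpotency of $S=\langle x,y\rangle$ forces $(\ad_S\,x)^k=0$ via the lower central series and that adjoint powers computed in $S$ agree with those in $L$.
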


\begin{proof} 
Let $y \in \nil(L)$ and $x\in L$. Then $\langle x,y\rangle$ is nilpotent, and so $y\in E_L(x)$.
\end{proof}

\begin{theorem}\label{z*} 
Let $L$ be a $n$-dimensional  Lie algebra over a field $F$. Then the following holds:
\begin{itemize}
\item[(i)] $\langle x, Z^*(L)\rangle$ is nilpotent for all $x\in L$; and
\item[(ii)] if $F$ has characteristic zero, then $Z^*(L)= \nil(L)$.
\end{itemize}
\end{theorem}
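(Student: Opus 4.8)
The plan is to treat the two parts separately, with part (i) feeding directly into the easy inclusion of part (ii).

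For (i), I would first observe that since each $Z_i(L)$ is an ideal and they form an increasing chain terminating at $Z^*(L)=Z_m(L)$, the set $Z^*(L)$ is an ideal; consequently $[x,Z^*(L)]\subseteq Z^*(L)$ and the generated subalgebra collapses to the subspace $S:=\langle x,Z^*(L)\rangle=Fx+Z^*(L)$. To prove $S$ is nilpotent I would exploit the defining property $[L,Z_i(L)]\subseteq Z_{i-1}(L)$ of the upper central series. From it one gets $[x,Z_i(L)]\subseteq Z_{i-1}(L)$ and, since $Z_j(L)\subseteq L$, also $[Z_j(L),Z_i(L)]\subseteq Z_{i-1}(L)$, so every bracket taken against $S$ lowers the index in the filtration $0=Z_0\subseteq Z_1\subseteq\cdots\subseteq Z_m=Z^*(L)$. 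A short induction then shows $S^{k+1}\subseteq Z_{m-k}(L)$ for all $k$, whence $S^{m+1}=0$ and $S$ is nilpotent. This part works over any field.

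For (ii), the inclusion $Z^*(L)\subseteq\nil(L)$ is immediate from (i): for $z\in Z^*(L)$ and any $h\in L$ we have $\langle h,z\rangle\subseteq\langle h,Z^*(L)\rangle$, a subalgebra of a nilpotent algebra, hence nilpotent, so $z\in\nil(L)$. The substance is the reverse inclusion $\nil(L)\subseteq Z^*(L)$ in characteristic zero, and here I would route through the Engel elements using the already-established containment $\nil(L)\subseteq E(L)$; the idea is to prove the stronger statement $E(L)\subseteq Z^*(L)$, which together with $Z^*(L)\subseteq\nil(L)\subseteq E(L)$ forces equality throughout.

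The key step, which I expect to be the main obstacle, is upgrading the pointwise Engel condition to collective nilpotency of the adjoint action. By definition every $y\in E(L)$ satisfies $(\ad h)^n y=0$ for each individual $h$, so each restriction $\ad h$ is a nilpotent operator on $E(L)$; but hypercentrality of $E(L)$ requires the uniform vanishing $(\ad h_1)\cdots(\ad h_k)\big(E(L)\big)=0$. Over a field of characteristic zero, $E(L)$ is a characteristic ideal, so restriction gives a Lie algebra homomorphism $\rho:L\to\mathfrak{gl}(E(L))$ whose image consists entirely of nilpotent operators; Engel's theorem then yields that $\rho(L)$ acts nilpotently on $E(L)$, i.e.\ exactly the required uniform vanishing. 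Finally I would invoke (or prove by a short downward induction on the upper central series) the fact that any ideal on which $L$ acts nilpotently is contained in $Z^*(L)$, giving $E(L)\subseteq Z^*(L)$ and completing the chain of inclusions. The role of the characteristic-zero hypothesis is precisely to guarantee that $E(L)$ is an ideal, which is what makes $\rho$ well defined and lets Engel's theorem apply.
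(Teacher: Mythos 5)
Your proposal is correct, and while part (i) matches the paper in substance, your part (ii) takes a genuinely different and considerably more elementary route. For (i), the paper notes $Z^*(L)\subseteq Z^*(S)$ for $S=\langle x,Z^*(L)\rangle$ and concludes from $S=Z^*(S)+Fx$; your explicit filtration argument $S^{k+1}\subseteq Z_{m-k}(L)$ proves the same thing directly (only a trivial remark: that inclusion holds for $k\geq 1$, not $k=0$, which is all you need). For (ii), the paper reduces to $Z(L)=0$, takes a faithful representation via Ado's theorem, passes to the algebraic hull $L^*$ (Chevalley), picks a Cartan subalgebra $C^*=E_{L^*}(u)$ (Barnes), invokes the Jordan and Fitting decompositions of $\ad u$, and reaches a contradiction through an eigenvalue computation and Lemma \ref{z}. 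You instead prove the stronger inclusion $E(L)\subseteq Z^*(L)$: in characteristic zero $E(L)$ is a characteristic ideal (the paper's earlier theorem), each $\ad h$ is pointwise nilpotent on $E(L)$ by definition of Engel elements and hence nilpotent as an operator by finite-dimensionality, Engel's theorem applied to the image of $L$ in $\mathfrak{gl}(E(L))$ gives the uniform vanishing $[E(L),L,\dots,L]=0$, and a standard induction up the upper central series places any ideal with this property inside $Z^*(L)$; the sandwich $Z^*(L)\subseteq \nil(L)\subseteq E(L)\subseteq Z^*(L)$, using the paper's Lemma stating $\nil(L)\subseteq E(L)$, then forces equality throughout. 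Your approach buys three things: it avoids Ado's theorem and algebraic hulls entirely; it yields the stronger conclusion $E(L)=\nil(L)=Z^*(L)$, a Lie-theoretic analogue of Baer's theorem on right Engel elements; and it isolates exactly where characteristic zero is used, namely only to guarantee that $E(L)$ is an ideal, since Engel's theorem is characteristic-free. This last point bears directly on the paper's open question about prime characteristic: your argument shows that statement (ii) holds over any field for which $E(L)$ is an ideal of $L$, so any counterexample in characteristic $p$ must have $E(L)$ fail to be an ideal. The paper's heavier machinery, by contrast, works directly with $\nil(L)$ and does not route through the structure of $E(L)$, but it establishes nothing beyond the stated equality.
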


\begin{proof} 
(i) Let $x\in L$ and put $S=\langle x, Z^*(L)\rangle$. Then $Z^*(L)\subseteq Z^*(S)\subseteq S$, so $S=Z^*(S)+Fx$ and $S$ is nilpotent.
\medskip

\noindent (ii) It follows from (i) that $Z^*(L)\subseteq \nil(L)$. So, it suffices to show that there is no non-trivial element in $\nil(L/Z^*(L))$; in other words we may assume that $Z(L)=0$ and we must show that $\nil(L)=0$. Let $0\neq x\in \nil(L)$. Let $\theta$ be a faithful matrix representation of $L$, which exists by Ado's Theorem and let $L^*$ be the algebraic hull of $\theta(L)$ (see \cite{chev} for existence and properties of the algebraic hull) and identify $L$ with $\theta(L)$..  Let $C^*$ be a Cartan subalgebra of $L^*$. Then $C^*=E_{L^*}(u)$, where $E_{L^*}(u)$ is minimal in the set $\{E_{L^*}(y) : y\in L^*\}$, by \cite[Theorem 1]{barnes}.
\par

Now the algebraic hull is almost algebraic (see \cite{ab}), so let $u=u_s+u_n$ be the Jordan decomposition of $u$. Then $E_{L^*}(u_s)\subseteq E_{L^*}(u)$, so we have that $u$ is semisimple and $E_{L^*}(u)=C_{L^*}(u)$.  Let $c\in C^*$. Then $[u,[x,c]]=[x,[u,c]]=0$, so $[x,c]\in C_L^*(u)=C^*$. Hence $x\in N_{L^*}(C^*)=C^*$ and $[x,u]=0$. Write $L^*=C^*\dot{+} L^*_1$ where $L^*_1$ is the Fitting one-component of ad\,$u$, by \cite{barnes}. 
\par

By passing to the algebraic closure we can suppose that $F$ is algebraically closed. Let $y_i,\ldots,y_r$ be a basis for $L^*_1$ and suppose that $[y_i,u]=\alpha_iy_i$ where $\alpha_i\in F$. Note that $L^*_1=[u,L^*_1]\subseteq [L^*,L^*]=[L,L]\subseteq L$ (see \cite[Proposition 1]{chev}. So $\langle x,y_i\rangle$ is nilpotent and there is a $k$ such that $(\ad\, y_i)^{k+1}(x)=0$, but $(\ad\, y_i)^k(x)\neq 0$. Put $S^*=\langle x,u+y_i\rangle$. Then $[[x,y_i],u]=[[y_i,u],x]=\alpha_i[x,y_i]$ and a straightforward induction proof shows that $[(\ad\, y_i)^n(x),u]=n\alpha_i(\ad\, y_i)^n(x)$. Now $[[[x,y_i],u+y_i]=\alpha_i[x,y_i]+[[x,y_i]y_i]$, whence $[[x,y_i]y_i]\in S^*$. Similarly, $(\ad\, y_i)^t(x)\in S^*$ for all $t$.  Now
\[ (\ad\, (u+y_i))^n((\ad\, y_i)^k(x))=(\ad\, u)^n(\ad\, y_i)^k(x)),
\]
so $(\ad\, y_i)^k(x)\in C^*$. It follows that 
\[ 0=[(\ad\, y_i)^k(x),u]=k\alpha_i(\ad\, y_i)^k(x).
\]
But, $\alpha_i=0$ implies that $y_i\in C^*\cap L^*_1=0$, so $k=0$ and $[y_i,x]=0$. Hence $x\in Z_{C^*}(L^*_1)\cap L$ and $ Z_{C^*}(L^*_1)\cap L$ is a non-trivial ideal of $C^*$. It must, therefore have non-trivial intersection with $Z(C^*)$, by Lemma \ref{z}, whence $Z(L^*)\cap L\neq 0$. But $Z(L^*)\cap L\neq 0\subseteq Z(L)$, contradiction.
\end{proof}


\begin{lemma} let $L$ be a completely solvable Lie algebra over any field $F$. Then $\nil(L)\subseteq N(L)$, the nilradical of $L$.
\end{lemma}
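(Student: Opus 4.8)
The plan is to prove the containment elementwise: I will show that each $x\in\nil(L)$ lies in $N(L)$ by exhibiting a nilpotent ideal of $L$ that contains it. The starting observation, valid over any field and for any $L$, is that every $x\in\nil(L)$ is ad-nilpotent on $L$. Indeed, for each $h\in L$ the subalgebra $\langle h,x\rangle$ is nilpotent, so $\ad x$ restricted to it is nilpotent and in particular $(\ad x)^{n_h}(h)=0$ for some $n_h$; letting $h$ range over a basis $h_1,\dots,h_n$ and setting $N=\max_i n_{h_i}$ gives $(\ad x)^N=0$ on $L$. Note that this uses only membership in $\nil(L)$, not the earlier inclusion $\nil(L)\subseteq E(L)$, and imposes no condition on the characteristic.

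Next I bring in complete solvability. Since $L^2$ is nilpotent and is an ideal, $L^2\subseteq N(L)$, so $L/N(L)$ is abelian. Fix $x\in\nil(L)$ and let $I=\langle x\rangle^L$ be the ideal of $L$ generated by $x$. Because $[L,x]\subseteq L^2\subseteq N(L)$, the subspace $Fx+N(L)$ is an ideal containing $x$, whence $I\subseteq Fx+N(L)$; setting $M:=I\cap N(L)$, the modular law gives $I=Fx+M$, where $M$ is an ideal of $L$ contained in $N(L)$ and hence nilpotent.

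It remains to prove that $I=Fx+M$ is nilpotent, for then $I$ is a nilpotent ideal of $L$ containing $x$, so $I\subseteq N(L)$ and $x\in N(L)$. Here $D:=\ad x|_M$ is a nilpotent derivation of the nilpotent Lie algebra $M$ (nilpotent by the first step, a derivation because $M$ is an $\ad x$-invariant ideal), and $I$ is the extension of $M$ by $Fx$ acting through $D$. I will construct a central series for $I$ as follows: the lower central series $M=M^1\supseteq M^2\supseteq\cdots$ consists of $D$-invariant ideals of $I$, and on each quotient $M^k/M^{k+1}$ the subalgebra $M$ acts trivially (since $[M,M^k]\subseteq M^{k+1}$) while $x$ acts through the nilpotent operator induced by $D$. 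Refining each such quotient by a flag of $D$-invariant subspaces on whose graded pieces $D$ acts as zero, and prepending the step $[I,I]\subseteq M$, yields a chain of ideals of $I$ from $I$ down to $0$ along which $\ad I$ shifts down by one, i.e.\ a central series; hence $I$ is nilpotent.

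The main obstacle is precisely this last step: although $\ad x$ and each $\ad m$ $(m\in M)$ act nilpotently on $I$, their sum need not, so a naive appeal to Engel's theorem fails. This is exactly what goes wrong for the solvable, non-nilpotent algebra $[x,y]=y$, where the analogue of $D$ is not nilpotent. The ad-nilpotency of $x$ established in the first step (equivalently, the nilpotency of $D$) is what rescues the argument, and the role of complete solvability is to force $I$ into the shape $Fx+M$ with $M$ nilpotent, so that this mechanism applies.
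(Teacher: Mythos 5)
Your proof is correct and takes essentially the same route as the paper: complete solvability ($L^2\subseteq N(L)$) makes $Fx+N(L)$ an ideal, the relevant ideal containing $x$ is shown to be nilpotent, and maximality of the nilradical then forces $x\in N(L)$. The only difference is one of completeness rather than strategy: the paper simply asserts that $Fx+N(L)$ is nilpotent, whereas your ad-nilpotency observation and the central-series construction for $Fx+M$ supply the justification that assertion needs (your detour through the ideal $\langle x\rangle^L$ instead of $Fx+N(L)$ itself is a harmless cosmetic variation).
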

\begin{proof} Let $x\in \nil(L)$. Then $Fx+N(L)$ is nilpotent. Moreover, it is an ideal of $L$, since $L^2\subseteq N(L)$. Hence $x\in N(L)$.   
\end{proof} 

\begin{lemma} Let $L$ be a classical simple Lie algebra over an infinite field of characteristic $p > 3$, or a semisimple
 Lie algebra in characteristic 0. Then $\nil(L)=0$.
\end{lemma}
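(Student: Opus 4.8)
The plan is to treat the two hypotheses separately. For a semisimple Lie algebra $L$ in characteristic zero the result is immediate from Theorem \ref{z*}(ii): semisimplicity gives $Z(L)=0$, and then a trivial induction up the upper central series ($Z_1(L)=Z(L)=0$ forces $Z_i(L)=0$ for all $i$) yields $Z^*(L)=0$; hence $\nil(L)=Z^*(L)=0$. So the substance of the lemma is the modular case, which I would attack directly through the root space decomposition.

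For a classical simple $L$ over an infinite field of characteristic $p>3$, fix a Cartan (maximal toral) subalgebra $H$ and write $L=H\oplus\bigoplus_{\alpha\in\Phi}L_\alpha$. Here I would invoke the standard structural facts that hold in this setting: every element of $H$ is $\ad$-semisimple, each root $\alpha$ is nonzero on $H$, the Cartan subalgebra is self-centralizing ($C_L(H)=H$), and $Z(L)=0$. Suppose, for contradiction, that $0\neq x\in\nil(L)$.

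The first step is to show $x\in H$. For each $h\in H$ the subalgebra $S=\langle h,x\rangle$ is nilpotent, so by Engel's theorem $\ad_S h$ is nilpotent; on the other hand $S$ is an $\ad h$-invariant subspace of $L$ and $\ad h$ is semisimple, so its restriction to $S$ is again semisimple, since the minimal polynomial of the restriction divides the (separable) minimal polynomial of $\ad h$. An operator that is both semisimple and nilpotent is zero, whence $[h,x]=0$; as this holds for every $h\in H$, we conclude $x\in C_L(H)=H$. The contradiction then follows quickly: for any root $\alpha$ and any $0\neq e_\alpha\in L_\alpha$ we have $[x,e_\alpha]=\alpha(x)e_\alpha$, and since $\langle x,e_\alpha\rangle$ is nilpotent the operator $\ad x$ acts nilpotently on it, forcing $\alpha(x)=0$. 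Letting $\alpha$ range over all roots gives $[x,L]=0$, i.e. $x\in Z(L)=0$, contradicting $x\neq 0$.

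I expect the main obstacle to be the first step, and in particular making sure the structural inputs genuinely hold in the modular world: a toral, self-centralizing Cartan subalgebra with nonvanishing roots and trivial center is exactly where the hypotheses \emph{classical}, \emph{$p>3$} and \emph{infinite field} are needed, and I would want to cite the appropriate structure theory rather than reprove it. The only analytically delicate point is the descent of semisimplicity to the invariant subspace $S$, which rests on the separability (square-freeness) of the minimal polynomial of $\ad h$; everything else is routine once $x$ has been pinned inside $H$.
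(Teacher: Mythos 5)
Your proof is correct, but it takes a genuinely different route from the paper's. The paper disposes of this lemma in one line, by citing the one-and-a-half generation theorem of Bois \cite[Theorem B]{bois}: under exactly the stated hypotheses, every nonzero $x\in L$ can be completed to a generating pair, i.e.\ $L=\langle x,y\rangle$ for some $y$; so if $0\neq x\in\nil(L)$ then $L$ itself would be nilpotent, which is absurd for a (semi)simple algebra. This also explains the otherwise curious infinite-field hypothesis: it is inherited from Bois's theorem, not demanded by the conclusion. Your argument replaces that deep external result with elementary structure theory: the characteristic-zero case via Theorem \ref{z*}(ii) together with $Z^*(L)=0$ (legitimate and non-circular, since Theorem \ref{z*} precedes this lemma and its proof does not use it), and the modular case via the root-space decomposition, where your key step --- the restriction of the semisimple operator $\ad\, h$ to the $\ad\, h$-invariant nilpotent subalgebra $\langle h,x\rangle$ is simultaneously semisimple and nilpotent, hence zero --- correctly pins $x$ into $C_L(H)=H$, after which nilpotency of each $\langle x,e_\alpha\rangle$ forces $\alpha(x)=0$ for all roots and so $x\in Z(L)=0$. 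What your approach buys is self-containedness and, as you half-noticed but did not quite conclude, the fact that infiniteness of the field is never used: for split classical algebras your proof gives $\nil(L)=0$ over arbitrary fields of characteristic $p>3$, finite ones included, which is strictly more than the lemma claims. The one point you must nail down is the meaning of \emph{classical}: your structural inputs (an abelian, toral, self-centralizing Cartan subalgebra and a root-space decomposition over $F$ itself, with one-dimensional root spaces and $Z(L)=0$ surviving even in the delicate case $\mathfrak{psl}_n$ with $p\mid n$) are facts about the \emph{split} algebras obtained from Chevalley bases, which is what ``classical'' means in \cite[Section 1.2.2]{bois} and hence in this lemma; if non-split forms over non-algebraically-closed fields were intended, your argument would need a base-change step, and the compatibility of $\nil(\cdot)$ with field extension is addressed neither by you nor by the paper. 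With the split reading, which is the intended one, your proof is complete.
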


\begin{proof} 
This follows from \cite[Theorem B and section 1.2.2]{bois}.
\end{proof}




\begin{remark}
Let $L$ be a reductive Lie algebra over a field $F$ of characteristic zero. Then $L$ decomposes as:
\[L = S \oplus Z(L),\]
where $S = [L, L]$ is semisimple and $Z(L)$ is the center of $L$.
\end{remark}

\begin{lemma}
Let $L$ be a reductive Lie algebra over a field $F$ of characteristic zero. Then $\nil(L) = Z(L)$.
\end{lemma}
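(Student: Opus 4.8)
The plan is to deduce the result entirely from Theorem~\ref{z*}(ii), which already identifies $\nil(L)$ with the hypercentre $Z^*(L)$ in characteristic zero; the work then reduces to the purely structural computation $Z^*(L)=Z(L)$ for a reductive algebra. First I would record the easy inclusion $Z(L)\subseteq\nil(L)$: if $z\in Z(L)$ and $x\in L$ is arbitrary, then $[x,z]=0$, so $\langle x,z\rangle=Fx+Fz$ is abelian and hence nilpotent, whence $z\in\nil(L)$. (This also follows at once from Theorem~\ref{z*}(i), since $Z(L)\subseteq Z^*(L)$.)

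For the reverse inclusion I would compute the upper central series directly, using the decomposition $L=S\oplus Z(L)$ from the Remark, where $S=[L,L]$ is semisimple. Since $Z(L)$ is an ideal and $S$ is a complementary ideal, the quotient $L/Z(L)$ is isomorphic to $S$ as a Lie algebra. A semisimple Lie algebra in characteristic zero has trivial centre: each simple summand, being nonabelian with no nontrivial proper ideals, has centre $0$, and the centre of a direct sum of ideals is the sum of the centres. Hence
\[
Z_2(L)/Z_1(L)=Z\bigl(L/Z(L)\bigr)\cong Z(S)=0,
\]
so that $Z_2(L)=Z_1(L)=Z(L)$ and the series stabilises immediately. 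Therefore $Z^*(L)=Z(L)$.

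Combining the two steps with Theorem~\ref{z*}(ii) yields $\nil(L)=Z^*(L)=Z(L)$, as required. I do not anticipate a genuine obstacle here: the substantive content is carried entirely by Theorem~\ref{z*}(ii), whose proof already handles the delicate Jordan-decomposition and algebraic-hull arguments. The only points needing care are the identification $L/Z(L)\cong S$ (that is, that the decomposition of the Remark is a direct sum of ideals, so the projection onto $S$ is a homomorphism with kernel $Z(L)$) and the standard fact that semisimplicity forces a trivial centre; both are routine. As an alternative route one could instead combine the earlier lemma giving $\nil(S)=0$ for semisimple $S$ with the behaviour of $\nil$ under direct sums, but the hypercentre computation above is self-contained within the results already established.
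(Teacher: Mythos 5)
Your proof is correct, but it follows a genuinely different route from the paper's. The paper never invokes Theorem~\ref{z*}(ii): it proves the reverse inclusion directly from the decomposition $L = S \oplus Z(L)$. Writing $x = s + z$ with $s \in S$, $z \in Z(L)$ and supposing $s \neq 0$, it uses the earlier lemma that $\nil(S) = 0$ for a semisimple algebra in characteristic zero (a consequence of Bois' generation theorem) to produce $y \in S$ with $\langle s, y \rangle$ non-nilpotent; since $z$ is central, $[x,y]=[s,y]$, so the subalgebra generated by $x$ and $y$ cannot be nilpotent either, contradicting $x \in \nil(L)$. Your argument instead routes everything through the identification $\nil(L) = Z^*(L)$ of Theorem~\ref{z*}(ii) and the structural computation $Z^*(L) = Z(L)$, for which the only semisimple input is the elementary fact $Z(S)=0$. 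What your route buys: it replaces the Bois-based lemma with a triviality about centres; the cost is that it leans on the technically heaviest result in the paper (Ado's theorem, algebraic hulls and Cartan subalgebras are all hidden inside Theorem~\ref{z*}(ii)), and it is locked to characteristic zero through that theorem, whereas the paper's template would apply to any algebra of the form $S \oplus Z$ with $\nil(S)=0$. Incidentally, the ``alternative route'' you sketch at the end --- combining $\nil(S)=0$ with the direct-sum formula $\nil(L_1 \oplus L_2) = \nil(L_1) \oplus \nil(L_2)$ to get $\nil(L) = \nil(S) \oplus \nil(Z(L)) = 0 \oplus Z(L)$ --- is closest in spirit to what the paper actually does, and is arguably cleaner than both arguments.
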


\begin{proof} 
Let $x \in Z(L)$. Then for any $y \in L$, we have $[x, y] = 0$, so $\langle x, y\rangle$ is abelian, hence nilpotent. Thus, $x \in \nil(L)$.

Reciprocally, let $x \in \nil(L)$. Write $x = s + z$ with $s \in S$, and $z \in Z(L)$. Suppose $s \neq 0$.

Since $S$ is semisimple and the field has characteristic zero, there exists $y \in S \subseteq L$ such that the subalgebra $\langle s, y \rangle$ is not nilpotent. But $z \in Z(L)$, so for all $y \in L$, we have:
\[[x, y] = [s + z, y] = [s, y] + [z, y] = [s, y].\]
Hence \( \langle x, y \rangle = \langle s, y \rangle \), which is not nilpotent, a contradiction. Therefore, $s = 0$, and $x = z \in Z(L)$. Since $S$ is semisimple and the field has characteristic zero, there exists \( y \in S \subseteq L \) such that the subalgebra \( \langle s, y \rangle \) is not nilpotent.
\end{proof}

\section{The nilpotent graph of a Lie algebra}

\begin{definition}
Let $L$ be a finite-dimensional non-nilpotent Lie algebra. The nilpotent graph of $L$, denoted by $\Gamma_{\mathfrak{N}}(L)$, is a simple undirected graph whose vertex set is $L\setminus \nil(L)$, and two vertices $x$ and $y$ are adjacent if and only if $\langle x, y\rangle$ is a nilpotent subalgebra of $L$.    
\end{definition}

\begin{example}
The nilpotent graph of $\mathfrak{t}(2,\mathbb{F}_2)$ consists of three connected components and each one of them is $K_{2}$
    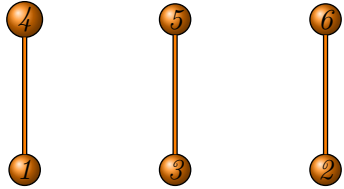
\begin{figure}[H]
    \centering
    \begin{tikzpicture}
        \GraphInit[vstyle=Shade]
        \tikzset{VertexStyle/.append style={minimum size=1pt, inner sep=1pt}}
        \renewcommand*{\EdgeLineWidth}{0.5pt}
        \Vertex{1}
        \Vertex[x=4 , y=0, ]{2}    
        \Vertex[x=4 , y=2]{6}  
        \Vertex[x=2 , y=0]{3} 
        \Vertex[x=0 , y=2]{4}   
        \Vertex[x=2 , y=2]{5}  
        \Edges(3,5)
        \Edges(1,4)
        \Edges(2,6)
    \end{tikzpicture}
    \caption{Nilpotent Graph of $\mathfrak{t}(2,\mathbb{F}_2)$}
    \end{figure}
    
    \begin{table}[H]
    \centering
    \begin{tabular}{||c|c||c|c||} 
        \hline
         Index & Matrix & Index & Matrix \\ [0.5ex] 
         \hline\hline
         1 & $\begin{pmatrix}
             0&0\\ 0 &1\\
         \end{pmatrix}$ &  4 & $\begin{pmatrix}
             1&0\\ 0 &0\\
         \end{pmatrix}$\\ 
         \hline
         2 & $\begin{pmatrix}
             0&1\\ 0 &0\\
         \end{pmatrix}$ & 5 & $\begin{pmatrix}
             1&1\\ 0 &0\\
         \end{pmatrix}$ \\
         \hline
         3 & $\begin{pmatrix}
             0&1\\ 0 &1\\
         \end{pmatrix}$ & 6 & $\begin{pmatrix}
             1&1\\ 0 &1\\
         \end{pmatrix}$ \\ [1ex] 
         \hline
    \end{tabular}
    \caption{Index and matrix representation}
    \label{tab:index_matrix}
    \end{table}

    In $\mathfrak{t}(2,\mathbb{F}_2)$ there are only two invertible matrices:
    $$ id = \begin{pmatrix}
             1&0\\ 0 &1\\
         \end{pmatrix} \quad A = \begin{pmatrix}
             1&1\\ 0 &1\\
         \end{pmatrix} $$
\end{example}

\begin{example}
The nilpotent graph of $\mathfrak{t}(2,\mathbb{F}_3)$ consists of four connected components, and each one of them is $K_{6}$
    \begin{figure}[H]
\begin{center}
    \begin{tikzpicture}
        \GraphInit[vstyle=Shade]
        \tikzset{VertexStyle/.append style={minimum size=1pt, inner sep=1pt}}
        \renewcommand*{\EdgeLineWidth}{0.5pt}
        \Vertex[x=-8.5,y=1]{1}
        \Vertex[x=-8 , y=0]{2}    
        \Vertex[x=-7 , y=0]{9} 
        \Vertex[x=-6.5 , y=1]{10}   
        \Vertex[x=-7 , y=2]{17}  
        \Vertex[x=-8 , y=2]{18}  
        \Edges(1,2)
        \Edges(1,9)
        \Edges(1,10)
        \Edges(1,17)
        \Edges(1,18)
        \Edges(2,9)
        \Edges(2,10)
        \Edges(2,17)
        \Edges(2,18)
        \Edges(9,10)
        \Edges(9,17)
        \Edges(9,18)
        \Edges(10,17)
        \Edges(10,18)
        \Edges(17,18)        
\end{tikzpicture}
\hskip0.5cm
\begin{tikzpicture}
        \GraphInit[vstyle=Shade]
        \tikzset{VertexStyle/.append style={minimum size=1pt, inner sep=1pt}}
        \renewcommand*{\EdgeLineWidth}{0.5pt}
        \Vertex[x=-4.5,y=1]{5}
        \Vertex[x=-4 , y=0]{7}    
        \Vertex[x=-3 , y=0]{11} 
        \Vertex[x=-2.5 , y=1]{16}   
        \Vertex[x=-3 , y=2]{20}  
        \Vertex[x=-4, y=2]{22}
        \Edges(5,7)
        \Edges(5,11)
        \Edges(5,16)
        \Edges(5,20)
        \Edges(5,22)
        \Edges(7,11)
        \Edges(7,16)
        \Edges(7,20)
        \Edges(7,22)
        \Edges(11,16)
        \Edges(11,20)
        \Edges(11,22)
        \Edges(16,20)
        \Edges(16,22)
        \Edges(20,22)       
\end{tikzpicture}
\hskip0.5cm
\begin{tikzpicture}
        \GraphInit[vstyle=Shade]
        \tikzset{VertexStyle/.append style={minimum size=1pt, inner sep=1pt}}
        \renewcommand*{\EdgeLineWidth}{0.5pt}
        \Vertex[x=-0.5,y=1]{4}
        \Vertex[x=0 , y=0]{8}    
        \Vertex[x=1 , y=0]{13} 
        \Vertex[x=1.5 , y=1]{14}   
        \Vertex[x=1 , y=2]{19}  
        \Vertex[x=0, y=2]{23} 
        \Edges(4,8)
        \Edges(4,13)
        \Edges(4,14)
        \Edges(4,19)
        \Edges(4,23)
        \Edges(8,13)
        \Edges(8,14)
        \Edges(8,19)
        \Edges(8,23)
        \Edges(13,14)
        \Edges(13,19)
        \Edges(13,23)
        \Edges(14,19)
        \Edges(14,23)
        \Edges(19,23)
\end{tikzpicture}
\hskip0.5cm
\begin{tikzpicture}
        \GraphInit[vstyle=Shade]
        \tikzset{VertexStyle/.append style={minimum size=1pt, inner sep=1pt}}
        \renewcommand*{\EdgeLineWidth}{0.5pt}
        \Vertex[x=-4.5,y=-2]{3}
        \Vertex[x=-4 , y=-3]{6}    
        \Vertex[x=-3 , y=-3]{12} 
        \Vertex[x=-2.5 , y=-2]{15}   
        \Vertex[x=-4 , y=-1]{21}
        \Vertex[x=-3 , y=-1]{24}
        \Edges(3,6)
        \Edges(3,12)
        \Edges(3,15)
        \Edges(3,21)
        \Edges(3,24)
        \Edges(6,12)
        \Edges(6,15)
        \Edges(6,21)
        \Edges(6,24)
        \Edges(12,15)
        \Edges(12,21)
        \Edges(12,24)
        \Edges(15,21)
        \Edges(15,24)
        \Edges(21,24)
    \end{tikzpicture}
\end{center}
    \caption{Nilpotent Graph of $\mathfrak{t}(2,\mathbb{F}_3)$}
    \end{figure}

    \begin{table}[H]
    \centering
    \begin{tabular}{||c|c||c|c||c|c||c|c||} 
        \hline
         Index & Matrix & Index & Matrix & Index & Matrix & Index & Matrix \\ [0.5ex] 
         \hline\hline
         1 & $\begin{pmatrix}
             0&0\\ 0 &1\\
         \end{pmatrix}$ & 7 & $\begin{pmatrix}
             0&2\\ 0 &1\\
         \end{pmatrix}$ & 13 & $\begin{pmatrix}
             1&1\\ 0 &2\\
         \end{pmatrix}$ & 19 & $\begin{pmatrix}
             2&1\\ 0 &0\\
         \end{pmatrix}$ \\ 
         \hline
         2 & $\begin{pmatrix}
             0&0\\ 0 &2\\
         \end{pmatrix}$ & 8 & $\begin{pmatrix}
             0&2\\ 0 &2\\
         \end{pmatrix}$ & 14 & $\begin{pmatrix}
             1&2\\ 0 &0\\
         \end{pmatrix}$ & 20 & $\begin{pmatrix}
             2&1\\ 0 &1\\
         \end{pmatrix}$ \\
         \hline
         3 & $\begin{pmatrix}
             0&1\\ 0 &0\\
         \end{pmatrix}$ & 9 & $\begin{pmatrix}
             1&0\\ 0 &0\\
         \end{pmatrix}$ & 15 & $\begin{pmatrix}
             1&2\\ 0 &1\\
         \end{pmatrix}$  & 21 & $\begin{pmatrix}
            2&1\\ 0 &2\\
         \end{pmatrix}$ \\
         \hline
         4 & $\begin{pmatrix}
             0&1\\ 0 &1\\
         \end{pmatrix}$ & 10 & $\begin{pmatrix}
             1&0\\ 0 &2\\
         \end{pmatrix}$ & 16 & $\begin{pmatrix}
             1&2\\ 0 &2\\
         \end{pmatrix}$ & 22 & $\begin{pmatrix}
             2&2\\ 0 &0\\
         \end{pmatrix}$ \\
         \hline
         5 & $\begin{pmatrix}
             0&1\\ 0 &2\\
         \end{pmatrix}$ & 11 & $\begin{pmatrix}
             1&1\\ 0 &0\\
         \end{pmatrix}$ & 17 & $\begin{pmatrix}
             2&0\\ 0 &0\\
         \end{pmatrix}$ & 23 & $\begin{pmatrix}
             2&2\\ 0 &1\\
         \end{pmatrix}$ \\ 
         \hline
         6 & $\begin{pmatrix}
             0&2\\ 0 &0\\
         \end{pmatrix}$ & 12 & $\begin{pmatrix}
             1&1\\ 0 &1\\
         \end{pmatrix}$ & 18 & $\begin{pmatrix}
             2&0\\ 0 &1\\
         \end{pmatrix}$ & 24 & $\begin{pmatrix}
             2&2\\ 0 &2\\
         \end{pmatrix}$  \\
         \hline
    \end{tabular}
    \caption{Index and matrix representation}
    \label{tab:index_matrix2}
    \end{table}

    In $\mathfrak{t}(2,\mathbb{F}_3)$, there exist 12 invertible matrices. The only one that does not live in one of the components is 
    $$A = \begin{pmatrix}
        2&0\\
        0&2
    \end{pmatrix}$$
\end{example}

\begin{remark}
\begin{enumerate}    
    \item The nilpotent graph of $\mathfrak{t}(2,\mathbb{F}_4)$ consists of five connected components, and each one of them is $K_{12}$

    \item The nilpotent graph of $\mathfrak{t}(2,\mathbb{F}_5)$ consists of six connected components, and each one of them is $K_{20}$
\end{enumerate}
\end{remark}







 This suggests the following question.   
\medskip

\textbf{Question.}
Does the nilpotent graph of $\mathfrak{t}(2,\mathbb{F}_q)$ consist of $q+1$ connected components, and each one of them is $K_{q(q-1)}$?
\bigskip

\noindent \underline{The algebra $L=\mathfrak{t}(2,F_n)$}
\medskip

Put $z=\begin{pmatrix} 1&0\\0&1\end{pmatrix}$, $y=\begin{pmatrix}0&1\\0&0\end{pmatrix}$, $F=F_n$. Then $\nil(L)=Z(L)=Fz$, $L^2=Fy$.

\begin{lemma} 
If $x\notin Z(L)$, then $\nil_L(x)$ is a two-dimensional abelian subalgebra of $L$.
\end{lemma}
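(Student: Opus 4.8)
The plan is to identify $\nil_L(x)$ with the centralizer $C_L(x)=\{w\in L : [x,w]=0\}$ and then to check directly that the latter is a two-dimensional abelian subalgebra. Write $x=\begin{pmatrix} a&b\\0&c\end{pmatrix}$ and let $w=\begin{pmatrix} p&q\\0&r\end{pmatrix}$ be arbitrary. A short matrix computation gives $[x,w]=\big((a-c)q-b(p-r)\big)\,y$, where $y=\begin{pmatrix}0&1\\0&0\end{pmatrix}$ spans $L^2$. Thus $C_L(x)$ is the kernel of the linear functional $w\mapsto (a-c)q-b(p-r)$ on the three-dimensional space $L$. Since $x\notin Z(L)=Fz$, we have $a\neq c$ or $b\neq 0$, so this functional is nonzero and $C_L(x)$ is exactly two-dimensional.

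First I would record the easy facts. Because $z$ is central, $[z,x]=0$, so $z\in C_L(x)$, and trivially $x\in C_L(x)$; as $x\notin Fz$ these two are linearly independent and hence span $C_L(x)$, giving $C_L(x)=Fx+Fz$. All brackets among $x$ and $z$ vanish (again since $z$ is central), so $C_L(x)$ is abelian. This also yields one inclusion: if $[x,w]=0$ then $\langle x,w\rangle$ is abelian, hence nilpotent, so $C_L(x)\subseteq \nil_L(x)$.

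The substantive step, and the one I expect to be the main obstacle, is the reverse inclusion $\nil_L(x)\subseteq C_L(x)$: I must show that nilpotency of $S=\langle x,w\rangle$ forces $[x,w]=0$. Suppose instead $[x,w]=\mu y$ with $\mu=(a-c)q-b(p-r)\neq 0$; then $y\in S$, and since $\mu\neq 0$ at least one of $a-c$ and $p-r$ is nonzero. Using $[x,y]=(a-c)y$ and $[w,y]=(p-r)y$, one of the adjoint maps $\ad\,x$ or $\ad\,w$ restricted to $S$ sends $y$ to a nonzero multiple of itself, hence is not a nilpotent operator. But in a nilpotent Lie algebra every adjoint map is nilpotent (Engel's theorem), contradicting the nilpotency of $S$. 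Therefore $\mu=0$, i.e.\ $w\in C_L(x)$. Combining the two inclusions gives $\nil_L(x)=C_L(x)=Fx+Fz$, a two-dimensional abelian subalgebra, as claimed. The only care needed is to keep the bracket formula and the two eigenvalue computations consistent with the sign convention $\ad\,x(w)=[w,x]$ used here, which affects eigenvalues only by a sign and so does not affect the non-nilpotency conclusion.
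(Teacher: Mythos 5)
Your proof is correct, but it takes a genuinely different route from the paper's. You prove the sharper statement $\nil_L(x)=C_L(x)$: the inclusion $C_L(x)\subseteq\nil_L(x)$ is trivial, the centralizer is computed explicitly in matrix coordinates as the kernel of the nonzero functional $w\mapsto (a-c)q-b(p-r)$ (nonzero precisely because $x\notin Fz$), and the reverse inclusion follows from the observation that $[x,w]=\mu y\neq 0$ puts $y$ into $S=\langle x,w\rangle$ as a common eigenvector on which $\ad x$ or $\ad w$ acts with nonzero eigenvalue, contradicting nilpotency of $S$. The paper never mentions the centralizer: it instead uses the sandwich $\langle x,z\rangle\subseteq\nil_L(x)\subseteq E_L(x)$ with the Engel subalgebra $E_L(x)$, and argues by cases on whether $E_L(x)$ is two-dimensional or all of $L$, analyzing the structure constants $\lambda,\mu,\nu$ abstractly in the latter case. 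Both arguments ultimately exploit the same obstruction (a nonzero eigenvalue on the line $L^2=Fy$), but yours is more elementary and self-contained, and it delivers the extra piece of information that nilpotentizers in $\mathfrak{t}(2,\mathbb{F}_q)$ coincide with centralizers, which makes the abelianness of $\nil_L(x)$ automatic; the paper's version is coordinate-free and ties into the $E_L(x)$ machinery used elsewhere (Cartan subalgebras, Engel elements). One small remark: you invoke Engel's theorem, but you only need the easy direction --- if $S$ is nilpotent then $\ad s|_S$ is nilpotent for every $s\in S$, which is immediate from the lower central series --- not the converse that Engel's theorem actually asserts.
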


\begin{proof} 
Certainly, $\langle x,z\rangle$ is a two-dimensional subalgebra which is contained in $\nil_L(x)\subseteq E_L(x)$. Now $E_L(x)$ is a subalgebra of $L$. If $E_L(x)$ is two-dimensional, then $\nil_L(x)=Fx+Fz$. 
\par

So suppose $E_L(x)=L$. Let $w\in L\setminus \langle x,z\rangle$, so $L=Fz+Fx+Fw$. Then $[x,w]=\lambda y$  and $[x,y]=\mu y$ for some $\lambda, \mu\in F$. Since $w\in E_L(x)$, $\lambda\mu=0$. If $\lambda=0$, then $L$ is abelian, a contradiction, so $\mu=0$. Let $[w,y]=\nu y$. Then $\nu\neq 0$, since, otherwise, $L$ is nilpotent. Hence $\nil_L(x)=Fx+Fz$.
\end{proof}

\begin{theorem}
The nilpotent graph of $\mathfrak{t}(2,\mathbb{F}_q)$ consist of $q+1$ connected components, and each one of them is $K_{q(q-1)}$.
\end{theorem}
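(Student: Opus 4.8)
The plan is to leverage the preceding Lemma, which pins down every nontrivial nilpotentiser, and to organise the vertex set $L\setminus\nil(L)=L\setminus Fz$ by the equivalence relation induced by adjacency. First I would record the numerology: $L=\mathfrak{t}(2,\mathbb{F}_q)$ is three-dimensional with $\nil(L)=Z(L)=Fz$ one-dimensional, so the graph has exactly $|L|-|Fz|=q^3-q=q(q-1)(q+1)$ vertices. By the preceding Lemma, for each vertex $x$ (i.e. $x\notin Z(L)$) the nilpotentiser $\nil_L(x)=Fx+Fz$ is a two-dimensional abelian subalgebra. Since $\langle x,v\rangle$ is nilpotent exactly when $v\in\nil_L(x)$, the neighbours of $x$ together with $x$ itself are precisely the $q^2$ elements of $Fx+Fz$; the $q$ elements of $Fz$ lie in $\nil(L)$ and are not vertices, which leaves $q^2-q=q(q-1)$ vertices in $\nil_L(x)\setminus Fz$.

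Next I would turn adjacency into an equivalence relation on the vertex set. Define $x\sim v$ iff $v\in\nil_L(x)$, equivalently iff $\langle x,v\rangle$ is nilpotent. Reflexivity is immediate. For symmetry and transitivity I would invoke part (v) of the earlier Lemma on nilpotentisers: because $\nil_L(x)$ is a subalgebra, any $v\in\nil_L(x)$ satisfies $\nil_L(v)=\nil_L(x)$. Hence $v\in\nil_L(x)$ forces $x\in\nil_L(v)$ (symmetry), and if moreover $w\in\nil_L(v)=\nil_L(x)$ then $w\in\nil_L(x)$ (transitivity). Consequently the classes are exactly the sets $\nil_L(x)\setminus Fz$, and two distinct classes are disjoint: a common vertex $w$ would force $\nil_L(x)=\nil_L(w)=\nil_L(v)$.

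It then remains to identify the classes with the components. Within a single class the subalgebra $\nil_L(x)$ is abelian, so any two of its vertices $v,w$ satisfy $[v,w]=0$, whence $\langle v,w\rangle$ is nilpotent; thus each class is a clique. Conversely, if $x$ and $v$ lie in different classes then $v\notin\nil_L(x)$, so $\langle x,v\rangle$ is not nilpotent and there is no edge. Therefore each class is simultaneously a maximal clique and a connected component, isomorphic to $K_{q(q-1)}$.

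Finally I would count the components. The distinct nilpotentisers $Fx+Fz$ correspond bijectively to the one-dimensional subspaces of the two-dimensional quotient $L/Z(L)$, of which there are $(q^2-1)/(q-1)=q+1$; equivalently, dividing the vertex total by the class size gives $(q^3-q)/\bigl(q(q-1)\bigr)=q+1$. This yields $q+1$ components, each a copy of $K_{q(q-1)}$, as claimed. I do not expect a serious obstacle: the preceding Lemma does the geometric heavy lifting, and the only point needing real care is verifying that distinct nilpotentisers meet only in $Z(L)$, so that they genuinely partition the vertex set — which is exactly what part (v) of the nilpotentiser Lemma delivers.
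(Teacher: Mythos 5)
Your proposal is correct and takes essentially the same approach as the paper: both arguments rest on the preceding Lemma identifying $\nil_L(x)=Fx+Fz$ as a two-dimensional abelian subalgebra, identify the component of $x$ with the clique $\nil_L(x)\setminus \nil(L)$ of size $q(q-1)$, and obtain $(q^3-q)/(q(q-1))=q+1$ components. The only cosmetic difference is how disjointness of distinct nilpotentisers is established --- you package adjacency as an equivalence relation via part (v) of the nilpotentiser lemma (applied, as you must, only to vertices, since the claim fails for central elements), whereas the paper uses a dimension count showing $\nil_L(x)\cap\nil_L(y)=\nil(L)$ when $\nil_L(x)\neq\nil_L(y)$ --- but these amount to the same fact.
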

\begin{proof}
Now $\nil(L)$ is the set of scalar matrices with elements in $F_q$ and there are $q$ of these. Since $L$ has $q^3$ elements, the number of vertices in the graph is $ q^3-q$. If $x\notin Z(L)$, $\nil_L(x)$ is two-dimensional, and so has $q^2$ elements. Thus, the component of the graph containing $x$ has $q^2-q$ vertices; as $\nil_L(x)$ is abelian, that component will be $K_{q(q-1)}$. If $\nil_L(x)\neq \nil_L(y)$ we have that $L=\nil_L(x)+\nil_L(y)$, so $\dim (\nil_L(x)\cap \nil_L(y)=1$, whence  $\nil_L(x)\cap \nil_L(y)=\nil(L)$. Hence, the total number of components in the graph is $(q^3 - q)/(q^2 - q) = q + 1$.
\end{proof}

As a consequence, we have the following.

 \begin{corollary}
The nilpotent graph of $\mathfrak{t}(2,\mathbb{F}_q)$ is $(q^2-q-1)$-regular.     
 \end{corollary}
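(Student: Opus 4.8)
The plan is to read the degree of each vertex directly off the structural description supplied by the preceding theorem, since regularity is an immediate consequence of the decomposition into complete components. By that theorem, $\Gamma_{\mathfrak{N}}(\mathfrak{t}(2,\mathbb{F}_q))$ is the disjoint union of $q+1$ connected components, each of which is isomorphic to the complete graph $K_{q(q-1)}$. The key observation I would invoke is the elementary fact that in a complete graph $K_n$ every vertex is adjacent to each of the remaining $n-1$ vertices, and hence has degree exactly $n-1$.

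First I would fix an arbitrary vertex $x \in L \setminus \nil(L)$ and let $C$ denote the connected component containing it. Since $C$ is one of the $q+1$ components, it is isomorphic to $K_{q(q-1)}$, so $x$ is adjacent within $C$ to all $q(q-1)-1$ of its co-vertices. Next I would note that, because $C$ is a connected component, there are no edges joining $x$ to any vertex outside $C$; therefore the degree of $x$ in the whole graph coincides with its degree inside $C$. Combining these two points gives
\[
\deg(x) = q(q-1) - 1 = q^2 - q - 1.
\]

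Since $x$ was chosen arbitrarily, every vertex of $\Gamma_{\mathfrak{N}}(\mathfrak{t}(2,\mathbb{F}_q))$ has the same degree $q^2 - q - 1$, which is precisely the definition of a $(q^2-q-1)$-regular graph. There is no genuine obstacle here: the entire content has already been established in the theorem, and the corollary is a purely combinatorial restatement. The only point that even warrants a remark is the disjointness of distinct components, guaranteeing that counting neighbours within a single copy of $K_{q(q-1)}$ accounts for all neighbours of the vertex; this is automatic from the notion of connected component and requires no further argument.
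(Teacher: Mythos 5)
Your proof is correct and matches the paper's intent exactly: the paper states this corollary as an immediate consequence of the preceding theorem, with each vertex lying in a component isomorphic to $K_{q(q-1)}$ and hence having degree $q(q-1)-1 = q^2-q-1$. Your explicit note that connected components admit no crossing edges is the only detail the paper leaves implicit, and it is handled correctly.
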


 We have the following results with the same argument as in Lemma 2.1 and Proposition 2.2 in \cite{Bhowal}.


\begin{lemma}\label{grado}
Let $L$ be a finite-dimensional Lie algebra over $F_q$. 
\begin{enumerate}[(i)]
    \item Then for every $h\in L\setminus \nil(L)$ we have
\begin{equation}\label{grado2}
  \deg(h) = |\nil_L(h)| - |\nil(L)| - 1.  
\end{equation}
    \item The nilpotent graph of $L$ is never a star graph.
\end{enumerate}
\end{lemma}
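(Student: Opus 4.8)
The plan is to handle the two parts separately, obtaining (ii) as a direct consequence of the degree count in (i).

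For (i), I would read off the neighbours of a vertex $h$ straight from the definitions. A vertex $y$ is adjacent to $h$ exactly when $\langle h,y\rangle$ is nilpotent, that is, when $y\in\nil_L(h)$, subject to $y$ being a genuine vertex ($y\notin\nil(L)$) and $y\neq h$ (the graph is simple). The two structural inputs are: first, the inclusion $\nil(L)\subseteq\nil_L(h)$ proved in the earlier Lemma, so that the elements of $\nil_L(h)$ which fail to be vertices are precisely those of $\nil(L)$; and second, $h\in\nil_L(h)$, since $\langle h,h\rangle=Fh$ is one-dimensional, hence abelian and nilpotent. Thus the neighbour set of $h$ is $\nil_L(h)\setminus(\nil(L)\cup\{h\})$, with $\nil(L)$ and $\{h\}$ disjoint, giving $\deg(h)=|\nil_L(h)|-|\nil(L)|-1$. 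This part is routine bookkeeping once $\nil(L)\subseteq\nil_L(h)$ is invoked.

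For (ii), I would argue by contradiction. Suppose $\Gamma_\mathfrak{N}(L)$ is a star $K_{1,n}$ and let $c$ be its centre (when $n=1$ the graph is $K_2$ and either vertex serves). By the very definition of a star, $c$ is adjacent to every other vertex, so the neighbourhood of $c$ together with $\{c\}$ exhausts the vertex set $L\setminus\nil(L)$. I would then show $\nil_L(c)=L$: it contains $c$ itself, it contains every other vertex (each being adjacent to $c$), and it contains $\nil(L)$ by the inclusion $\nil(L)\subseteq\nil_L(c)$; together these three pieces account for all of $L$. Hence $\langle c,x\rangle$ is nilpotent for every $x\in L$, i.e. $c\in\nil(L)$, contradicting that $c$ is a vertex. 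Equivalently, substituting $\deg(c)=|L\setminus\nil(L)|-1$ into the formula of part (i) forces $|\nil_L(c)|=|L|$, which is the same contradiction phrased through (i).

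The point I would stress is that this argument is uniform in $q$ and needs no case analysis on the field size or on connectivity: the only property of a star used is that the centre is adjacent to all remaining vertices, and that single fact already collapses $\nil_L(c)$ onto all of $L$. So the main difficulty is conceptual rather than computational, namely recognising that ``adjacent to every vertex'' is exactly the defining condition for membership in $\nil(L)$, supported by the two small observations that $h\in\nil_L(h)$ and that $\nil(L)$ is disjoint from the neighbour set. The likely pitfall is an unnecessary split into subcases according to $|F|$ (for instance via the clique formed by the scalar multiples $\{\lambda h:\lambda\in F^\ast\}$, which gives a triangle for $q\geq 4$); the neighbourhood-exhaustion argument sidesteps this entirely.
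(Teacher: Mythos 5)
Your proposal is correct and follows essentially the same route as the paper: part (i) counts neighbours as $\nil_L(h)$ minus the non-vertices $\nil(L)$ and the vertex $h$ itself, and part (ii) notes that a star's centre would have full degree, forcing $|\nil_L(c)|=|L|$ and hence $c\in\nil(L)$, a contradiction. Your write-up is merely more explicit about the bookkeeping (disjointness of $\nil(L)$ and $\{h\}$, the inclusion $\nil(L)\subseteq\nil_L(h)$) than the paper's terser version.
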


\begin{proof}
(i) Since $h\in \nil_L(h)$, we see that $|\nil_L(h)| - 1$ is exactly the number of vertices that are adjacent to $h$. Due to the vertex set being $L\setminus \nil(L)$, it follows that $ \deg(h) = |\nil_L(h)| - |\nil(L)| - 1$. 

(ii) Suppose that the nilpotent graph of $L$ is a star. Then there exists $x\in L\setminus \nil(L)$ such that $\deg(x) = |L- \nil(G)| - 1 =  |L|-|\nil(G)|-1$. Thus,  $|\nil_L(x)| = |L|$ and so $x\in \nil(L)$, which is a contradiction.
\end{proof}

\begin{corollary}
Let $L$ be a finite-dimensional solvable Lie algebra over $F_q$. If $\dim(\nil(L))$ is even and divides $\dim (\nil_L(x))$ for all $x\in L$, then $\Gamma_\mathfrak{N}(L)$ is not Eulerian.
\end{corollary}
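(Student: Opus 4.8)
The plan is to show that \emph{every} vertex of $\Gamma_\mathfrak{N}(L)$ has odd degree, which already rules out the graph being Eulerian: a graph admitting an Eulerian circuit must have all of its vertex degrees even, and a vertex of positive odd degree can never lie on such a circuit. So it suffices to verify that $\deg(h)$ is odd for each $h \in L \setminus \nil(L)$.

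First I would invoke Lemma~\ref{grado}(i), which gives $\deg(h) = |\nil_L(h)| - |\nil(L)| - 1$. Since $L$ is a vector space over the finite field $F_q$, and both $\nil(L)$ and $\nil_L(h)$ are subspaces (the hypothesis refers to their dimensions), their cardinalities are powers of $q$. Writing $d = \dim(\nil(L))$ and $m = \dim(\nil_L(h))$, this yields
\[
\deg(h) = q^{m} - q^{d} - 1.
\]
The divisibility assumption ``$d$ divides $\dim(\nil_L(x))$ for all $x$'' implicitly requires $d \geq 1$, since division by $0$ is excluded; and because $h \in \nil_L(h)$ but $h \notin \nil(L)$, the inclusion is strict, $\nil(L) \subsetneq \nil_L(h)$, so $m > d \geq 1$. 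With both $m$ and $d$ positive, $q^{m}$ and $q^{d}$ have the same parity: both are even when $q$ is even and both are odd when $q$ is odd. In either case $q^{m} - q^{d}$ is even, and therefore $\deg(h) = (q^{m} - q^{d}) - 1$ is odd.

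Since this holds for every vertex $h$, the graph $\Gamma_\mathfrak{N}(L)$ has no vertex of even degree, and in particular cannot be Eulerian, which is the claim. The only genuine point requiring care — the main obstacle — is justifying the clean counting identities $|\nil(L)| = q^{d}$ and $|\nil_L(h)| = q^{m}$; this rests on $\nil(L)$ and each $\nil_L(h)$ being honest $F_q$-subspaces so that they have exactly $q^{\dim}$ elements, together with the observation that the divisibility hypothesis forces $d \geq 1$. Once these are in place the parity bookkeeping is immediate, and one sees that the evenness of $d$ serves only to guarantee this setup rather than to drive the parity computation itself.
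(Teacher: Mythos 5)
Your proof is correct and takes essentially the same route as the paper: the paper's entire proof is the one line ``It follows straightforward from \eqref{grado2}'', and your computation $\deg(h)=|\nil_L(h)|-|\nil(L)|-1=q^{m}-q^{d}-1$, which is odd once $m,d\geq 1$, is exactly the omitted ``straightforward'' step. Your closing observation is also accurate: the argument really only needs $\dim(\nil(L))\geq 1$ (which the divisibility hypothesis forces, given $\nil_L(x)\neq 0$), so the evenness assumption in the statement is redundant rather than load-bearing.
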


\begin{proof}
It follows straightforward from \eqref{grado2}.
\end{proof}

\begin{lemma}
Let $U$ be a nilpotent subalgebra of a finite-dimensional Lie algebra $L$, such that $U\not\subseteq \nil(G)$. Then the set 
$U\setminus (U\cap \nil(L))$ forms a clique in $\Gamma_\mathfrak{N}(L)$.
\end{lemma}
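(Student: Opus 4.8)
The plan is to verify directly that the defining conditions for a clique hold. Recall that a clique requires every pair of distinct vertices to be adjacent, and that adjacency of $x$ and $y$ in $\Gamma_\mathfrak{N}(L)$ means precisely that $\langle x,y\rangle$ is nilpotent. So there are really two things to check: first, that each element of $U\setminus(U\cap\nil(L))$ is genuinely a vertex of $\Gamma_\mathfrak{N}(L)$; and second, that any two such elements are joined by an edge.

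First I would dispose of the set-membership bookkeeping. The hypothesis $U\not\subseteq\nil(L)$ guarantees $U\setminus(U\cap\nil(L))\neq\emptyset$, so the statement is not vacuous. For the vertex condition, if $x\in U\setminus(U\cap\nil(L))$, then $x\in U$ while $x\notin U\cap\nil(L)$; since $x$ already lies in $U$, the only way it can fail to lie in $U\cap\nil(L)$ is that $x\notin\nil(L)$. Hence $x\in L\setminus\nil(L)$, which is exactly the vertex set of $\Gamma_\mathfrak{N}(L)$, so every element of the set under consideration is indeed a vertex.

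The adjacency step is the heart of the argument, and it rests on the standard fact that every subalgebra of a nilpotent Lie algebra is itself nilpotent. Given distinct $x,y\in U\setminus(U\cap\nil(L))$, both lie in $U$, and since $U$ is a subalgebra it contains the subalgebra they generate, i.e. $\langle x,y\rangle\subseteq U$. As $U$ is nilpotent, its subalgebra $\langle x,y\rangle$ is nilpotent as well, so by definition $x$ and $y$ are adjacent in $\Gamma_\mathfrak{N}(L)$. Because $x$ and $y$ were an arbitrary pair of distinct elements, all of them are pairwise adjacent.

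I do not anticipate any genuine obstacle here; the argument is essentially a one-line invocation of hereditary nilpotency of subalgebras, and the only points that require a little care are the logical bookkeeping with the set-difference notation used to place the elements in the vertex set. Combining the two checks, the elements of $U\setminus(U\cap\nil(L))$ are vertices that are pairwise adjacent, which is exactly the assertion that they form a clique in $\Gamma_\mathfrak{N}(L)$.
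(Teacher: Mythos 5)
Your proof is correct and follows essentially the same route as the paper: the key step in both is that $\langle x,y\rangle\subseteq U$ and every subalgebra of a nilpotent Lie algebra is nilpotent, hence all pairs in $U\setminus(U\cap\nil(L))$ are adjacent. Your additional bookkeeping (checking the elements are genuinely vertices of $\Gamma_\mathfrak{N}(L)$) is left implicit in the paper but is a welcome clarification, not a different argument.
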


\begin{proof}
For all $x,y\in U\setminus (U\cap \nil(L))$ we have that $\langle x,y\rangle$ is nilpotent. Thus, the subgraph of $\Gamma_\mathfrak{N}(L)$  having $U- (U\cap \nil(L))$ as its set of vertices is a clique. 
\end{proof}

\section{Direct Sums}

\begin{lemma}\label{ComponentsNilAdjacency}
Let $L_1$ and $L_2$ be Lie algebras over a field $F$, $a_1, a_2\in L_1$ and $b_1, b_2\in L_2$, Then $\langle (a_1, b_1), (a_2, b_2) \rangle$ is a nilpotent Lie subalgebra of $L_1\oplus L_2$ if and only if $\langle a_1, a_2 \rangle$ and $\langle b_1, b_2 \rangle$ are nilpotent Lie subalgebras of $L_1$ and $L_2$ respectively.
\end{lemma}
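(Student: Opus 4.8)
The plan is to exploit the componentwise bracket on $L_1 \oplus L_2$ together with the two canonical projections $\pi_1 : L_1 \oplus L_2 \to L_1$ and $\pi_2 : L_1 \oplus L_2 \to L_2$. Writing $S = \langle (a_1,b_1), (a_2,b_2) \rangle$, $A = \langle a_1, a_2 \rangle$ and $B = \langle b_1, b_2 \rangle$, the starting observation is that each $\pi_i$ is a Lie algebra homomorphism, since $[(x_1,y_1),(x_2,y_2)] = ([x_1,x_2],[y_1,y_2])$, and that both generators of $S$ lie in the subalgebra $A \oplus B$, so that $S \subseteq A \oplus B$.

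Next I would establish the two structural identities that drive both directions. Because a homomorphism carries a generated subalgebra onto the subalgebra generated by the images, we get $\pi_1(S) = \langle \pi_1(a_1,b_1), \pi_1(a_2,b_2) \rangle = \langle a_1, a_2 \rangle = A$, and similarly $\pi_2(S) = B$. For the forward direction this is enough: if $S$ is nilpotent then so are its homomorphic images $\pi_1(S) = A$ and $\pi_2(S) = B$, using that a homomorphic image of a nilpotent Lie algebra is nilpotent (concretely, $\phi(S^k) = \phi(S)^k$ for any surjective homomorphism $\phi$).

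For the converse, the key is that the lower central series of a direct sum splits componentwise: an easy induction on $k$ gives $(A \oplus B)^k = A^k \oplus B^k$, again because the bracket acts separately on each factor. Hence if $A^m = 0$ and $B^n = 0$ then $(A \oplus B)^{\max(m,n)} = 0$, so $A \oplus B$ is nilpotent whenever $A$ and $B$ are. Since $S \subseteq A \oplus B$ and every subalgebra of a nilpotent Lie algebra is nilpotent, $S$ is nilpotent, completing the equivalence.

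I do not expect a genuine obstacle here; the only points requiring a little care are verifying that $\pi_i(S)$ equals (rather than merely contains) $A$ and $B$, which follows from the generation property of homomorphisms, and checking the componentwise formula for the lower central series, which is the computational heart of the backward implication.
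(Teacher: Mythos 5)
Your proof is correct and follows essentially the same route as the paper: the backward direction in both cases rests on the containment $\langle (a_1,b_1),(a_2,b_2)\rangle \subseteq \langle a_1,a_2\rangle \oplus \langle b_1,b_2\rangle$ together with the fact that a direct sum of nilpotent algebras is nilpotent, and the forward direction is the paper's ``every product of $n$ elements of $\langle a_1,a_2\rangle$ occurs as the first coordinate of such a product,'' which is exactly your statement $\pi_i(S)=A$ (resp.\ $B$) plus nilpotency of homomorphic images. The only difference is cosmetic: you make the projection homomorphisms and the identity $(A\oplus B)^k = A^k \oplus B^k$ explicit, where the paper argues directly with coordinates of products.
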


\begin{proof}
It is clear that if $(x,y) \in \langle(a,b),(c,d)\rangle$ then $x \in \langle a,c\rangle$ and $y \in \langle b,d\rangle$ (a simple induction would give a formal proof), so $\langle(a,b),(c,d)\rangle \subseteq \langle a,c\rangle \oplus \langle b,d\rangle$. Hence $\langle a,c\rangle$ and $\langle b,d\rangle$ nilpotent implies that $\langle(a,b),(c,d)\rangle$ is nilpotent. For the converse, note that there must be an n such that every product of n elements of $\langle(a,b),(c,d)\rangle$ must be zero. But every product of n elements of $\langle a,c\rangle$ occurs as the first coordinate of such a product, and so is zero. Similarly, for $\langle b,d\rangle$.
\end{proof}




\begin{corollary}
Let $L_1$ and $L_2$ be Lie algebras over a field $F$. Then $\nil(L_1\oplus L_2) = \nil(L_1) \oplus \nil(L_2)$.
\end{corollary}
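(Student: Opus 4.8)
The plan is to prove the two inclusions separately, with the decisive work being carried out by the preceding adjacency lemma, Lemma~\ref{ComponentsNilAdjacency}. Throughout I identify $\nil(L_1) \oplus \nil(L_2)$ with the subset $\{(x,y) \in L_1 \oplus L_2 \mid x \in \nil(L_1),\ y \in \nil(L_2)\}$, so that the claimed equality is an equality of subsets of $L_1 \oplus L_2$.

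For the inclusion $\nil(L_1) \oplus \nil(L_2) \subseteq \nil(L_1 \oplus L_2)$, I would take an arbitrary element $(x,y)$ with $x \in \nil(L_1)$ and $y \in \nil(L_2)$, together with an arbitrary $(h_1, h_2) \in L_1 \oplus L_2$, and show that $\langle (h_1, h_2), (x, y)\rangle$ is nilpotent. By Lemma~\ref{ComponentsNilAdjacency} this reduces to checking that $\langle h_1, x\rangle$ and $\langle h_2, y\rangle$ are each nilpotent, and these follow at once from $x \in \nil(L_1)$ and $y \in \nil(L_2)$, respectively, since $h_1$ and $h_2$ are arbitrary in their factors.

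For the reverse inclusion $\nil(L_1 \oplus L_2) \subseteq \nil(L_1) \oplus \nil(L_2)$, the key device is to probe a general element $(x,y) \in \nil(L_1 \oplus L_2)$ with the ``one-sided'' test elements $(h_1, 0)$ and $(0, h_2)$. Fixing any $h_1 \in L_1$, the defining property of $\nil(L_1 \oplus L_2)$ gives that $\langle (h_1, 0), (x, y)\rangle$ is nilpotent, and Lemma~\ref{ComponentsNilAdjacency} then forces $\langle h_1, x\rangle$ to be nilpotent; as $h_1$ was arbitrary, $x \in \nil(L_1)$. The symmetric argument with $(0, h_2)$ yields $y \in \nil(L_2)$, so $(x,y) \in \nil(L_1) \oplus \nil(L_2)$.

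I do not anticipate a genuine obstacle here, since Lemma~\ref{ComponentsNilAdjacency} already transfers nilpotency of generated subalgebras coordinatewise in both directions; the only point requiring a little care is the choice of one-sided probe elements in the reverse inclusion, which ensures that the $\nil$-condition in the direct sum can be tested independently in each factor.
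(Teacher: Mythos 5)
Your proof is correct and follows essentially the same route as the paper: both arguments rest entirely on Lemma~\ref{ComponentsNilAdjacency}, with the paper merely compressing your two inclusions into a single chain of equivalences (and using arbitrary probes $(x,y)$ where you use one-sided probes $(h_1,0)$ and $(0,h_2)$, an immaterial difference since the lemma is a biconditional).
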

 
\begin{proof}
\begin{align*}
(g, h) \in \nil(L_1) \oplus \nil(L_2) & \Leftrightarrow \langle g, x \rangle, \langle h, y \rangle \in \mathfrak{N} \ \ \forall x \in L_1 \ \ \forall y \in L_2\\
& \Leftrightarrow \langle (g, h), (x, y) \rangle \ \in\mathfrak{N} \ \  \forall  (x, y) \in L_1\oplus L_2\\
& \Leftrightarrow (g, h) \in \nil(L_1\oplus L_2).
\end{align*}  
\end{proof}

In general, If $L$ decomposes as a direct sum of ideals $L = L_1\oplus L_2 \oplus \cdots \oplus L_k$, then one can show:
\[\nil(L) = \nil(L_1) \oplus \nil(L_2) \oplus \cdots \oplus \nil(L_k).\]
That is, the nilpotentizer of a direct-sum algebra is the direct sum of the nilpotentizers of its summands.





\begin{proposition}
Let $L_1$ and $L_2$ be non-nilpotent finite-dimensional Lie algebra over a field $F$. Then $\Gamma_{\mathfrak{N}}(L_1\oplus L_2)$ is connected.
\end{proposition}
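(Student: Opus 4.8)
The plan is to exploit Lemma \ref{ComponentsNilAdjacency}, which reduces adjacency in $L_1\oplus L_2$ to adjacency in each factor separately, together with the corollary identifying $\nil(L_1\oplus L_2)=\nil(L_1)\oplus\nil(L_2)$. The decisive observation is that a coordinate lying in the respective nilpotentiser imposes no constraint on adjacency: if $a\in\nil(L_1)$ then $\langle a,a'\rangle$ is nilpotent for every $a'\in L_1$. In particular, since $0\in\nil(L_i)$ (a subalgebra generated by a single element is one-dimensional, hence abelian, hence nilpotent), I would route all connectivity through the \emph{axis} vertices $(a,0)$ with $a\in L_1\setminus\nil(L_1)$ and $(0,b)$ with $b\in L_2\setminus\nil(L_2)$.

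First I would record that these axis vertices exist. Since $L_i$ is non-nilpotent and $\nil(L_i)\subseteq E(L_i)$ with $E(L_i)$ nilpotent by Engel's theorem, we have $\nil(L_i)\subsetneq L_i$; hence there is some $a\in L_1\setminus\nil(L_1)$ and some $b\in L_2\setminus\nil(L_2)$, and then $(a,0)$ and $(0,b)$ are genuine vertices of $\Gamma_{\mathfrak{N}}(L_1\oplus L_2)$ by the description of $\nil(L_1\oplus L_2)$.

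The key step is that any $L_1$-axis vertex $(a,0)$ is adjacent to any $L_2$-axis vertex $(0,b)$: by Lemma \ref{ComponentsNilAdjacency} this amounts to $\langle a,0\rangle=Fa$ and $\langle 0,b\rangle=Fb$ being nilpotent, which holds trivially. Thus the axis vertices span a complete bipartite subgraph with both sides nonempty, which is connected; in particular any two $L_1$-axis vertices $(a,0),(a',0)$ are joined via any $(0,b)$, and symmetrically for $L_2$-axis vertices.

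Finally I would attach an arbitrary vertex $(a,b)$ to this connected core. Since $(a,b)\notin\nil(L_1)\oplus\nil(L_2)$, either $a\notin\nil(L_1)$ or $b\notin\nil(L_2)$. In the first case $(a,b)$ is adjacent to the $L_1$-axis vertex $(a,0)$, because $\langle a,a\rangle$ and $\langle b,0\rangle$ are nilpotent; in the second case $(a,b)$ is adjacent to the $L_2$-axis vertex $(0,b)$ by the symmetric computation. Either way $(a,b)$ connects to the core, so the graph is connected. I do not anticipate a serious obstacle here; the one point that must not be skipped is the verification that both axis families are nonempty, which is exactly where the non-nilpotency hypothesis on each factor is used (via $\nil(L_i)\subseteq E(L_i)$). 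Without it one axis could be empty and the argument would degenerate to the possibly disconnected graph of a single factor, as the examples $\mathfrak{t}(2,\mathbb{F}_q)$ illustrate.
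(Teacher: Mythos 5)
Your proof is correct and takes essentially the same route as the paper's: both rest on Lemma \ref{ComponentsNilAdjacency} and route all paths through the axis vertices $(a,0)$ and $(0,b)$, whose existence is guaranteed by the non-nilpotency of each factor. The paper organizes this as a case analysis on which coordinates lie in the nilpotentizers while you package it as a complete bipartite core plus an attachment step, a purely organizational difference; your explicit justification that $\nil(L_i)\subsetneq L_i$ (via $\nil(L_i)\subseteq E(L_i)$ and Engel's theorem) is a point the paper uses but leaves implicit.
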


\begin{proof}
Let $(x_1, y_1), \, (x_2, y_2) \in L_1\oplus L_2 \setminus \nil(L_1\oplus L_2)$. Then $x_1 \notin \nil(L_1)$ or $y_1 \notin \nil(L_2)$ and $x_2 \notin \nil(L_1)$ or $y_2 \notin \nil(L_2)$. Consider the following cases:

\noindent \textbf{Case 1.} $x_1 \notin \nil(L_1)$ and $y_2 \notin \nil(L_2)$. In this case the path $(x_1, y_1)$, $(x_1,0)$, $(0, y_2)$, $(x_2, y_2)$ connecting the vertices $(x_1, y_1), (x_2, y_2)$. 

\noindent \textbf{Case 2.} $x_1 \in \nil(L_1)$ and $x_2 \in \nil(L_1)$. Since $L_1$ is non-nilpotent, there exists $z \in L_1\setminus  \nil(L_1)$. Then $(x_1, y_1)$, $(z, 0)$, $(x_2, y_2)$ is a path connecting   $(x_1, y_1)$ and $(x_2, y_2)$. 

The remaining cases are analogous to one of the above.
\end{proof}

\par

\begin{theorem}
\label{KappaProductNilpotent}
Let $L_1$ be a finite dimensional non-nilpotent Lie algebra. For every finite dimensional nilpotent Lie algebra $L$, it holds $ \kappa(L_1\oplus L) = \kappa(L_1)$. In particular, $\Gamma_{\mathfrak{N}}(L_1)$ is connected if and only if $\Gamma_{\mathfrak{N}}(L_1\oplus L)$ is connected for every finite dimenional nilpotent  Lie algebra $L$.
\end{theorem}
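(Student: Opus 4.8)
The plan is to reduce every question about adjacency in $L_1\oplus L$ to the corresponding question in $L_1$, exploiting that $L$ is nilpotent. First I would record two consequences of the nilpotency of $L$: every subalgebra of $L$ is nilpotent, so $\langle b_1,b_2\rangle$ is nilpotent for \emph{all} $b_1,b_2\in L$; and $\nil(L)=L$. Combined with the corollary $\nil(L_1\oplus L_2)=\nil(L_1)\oplus\nil(L_2)$, this gives $\nil(L_1\oplus L)=\nil(L_1)\oplus L$, so the vertex set of $\Gamma_{\mathfrak{N}}(L_1\oplus L)$ is exactly $\{(x,b): x\in L_1\setminus\nil(L_1),\ b\in L\}$. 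This set is nonempty precisely because $L_1$ is non-nilpotent, which is where that hypothesis is used.

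Next I would analyze adjacency via Lemma \ref{ComponentsNilAdjacency}: the vertices $(x_1,b_1)$ and $(x_2,b_2)$ are adjacent if and only if both $\langle x_1,x_2\rangle$ and $\langle b_1,b_2\rangle$ are nilpotent. Since the second condition is automatic, adjacency in $L_1\oplus L$ depends only on the first coordinates. Two immediate consequences follow: any two distinct vertices sharing a first coordinate are adjacent, because $\langle x,x\rangle=Fx$ is abelian, so each fibre $\{x\}\times L$ is a clique; and for $x_1\neq x_2$, we have $(x_1,b_1)$ adjacent to $(x_2,b_2)$ exactly when $x_1$ is adjacent to $x_2$ in $\Gamma_{\mathfrak{N}}(L_1)$.

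The core step is to show that the projection $\pi(x,b)=x$ induces a bijection between the connected components of $\Gamma_{\mathfrak{N}}(L_1\oplus L)$ and those of $\Gamma_{\mathfrak{N}}(L_1)$. For one direction, given a path in $\Gamma_{\mathfrak{N}}(L_1\oplus L)$ I would project it and delete consecutive repetitions of the first coordinate, obtaining a walk in $\Gamma_{\mathfrak{N}}(L_1)$ between the endpoints' first coordinates; hence vertices in the same component map into the same component of $\Gamma_{\mathfrak{N}}(L_1)$. Conversely, given a path $x_1=w_0,\dots,w_m=x_2$ in $\Gamma_{\mathfrak{N}}(L_1)$, I would lift it with a fixed second coordinate, $(w_0,b_1),\dots,(w_m,b_1)$, which is a genuine path since consecutive first coordinates are adjacent and the second coordinate is constant, and then append the within-fibre edge to $(x_2,b_2)$; this connects $(x_1,b_1)$ to $(x_2,b_2)$. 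Therefore the component of $(x,b)$ is exactly $C\times L$, where $C$ is the component of $x$ in $\Gamma_{\mathfrak{N}}(L_1)$, and these sets partition the vertex set, giving $\kappa(L_1\oplus L)=\kappa(L_1)$. The final assertion about connectivity is then the special case $\kappa=1$.

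The argument is essentially projection and lifting bookkeeping, so no single step is genuinely hard; the only points requiring care are the boundary cases in the path correspondence. Specifically, one must allow consecutive vertices in a projected path to coincide, so that one obtains a walk rather than a path, which is harmless for deciding connectivity, and one must check that the lifted path never leaves the vertex set, which holds because its first coordinate stays in $L_1\setminus\nil(L_1)$ throughout. I would state these checks explicitly to make the bijection of components airtight.
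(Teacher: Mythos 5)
Your proof is correct and follows essentially the same route as the paper's: both use Lemma \ref{ComponentsNilAdjacency} together with $\nil(L_1\oplus L)=\nil(L_1)\oplus L$ to reduce adjacency in $\Gamma_{\mathfrak{N}}(L_1\oplus L)$ to adjacency of first coordinates, and then transfer paths between the two graphs. Your write-up is in fact more careful than the paper's, since you explicitly handle the case of equal first coordinates (the fibre cliques) and the walk-versus-path issue, both of which the paper's terse argument glosses over.
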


\begin{proof}
Let $L$ be any finite dimensional nilpotent Lie algebra. Then $\nil(L) = L$ and $\nil(L_1\oplus L) = \nil(L_1)\oplus L$. Thus, by Lemma \ref{ComponentsNilAdjacency},  
$(z,w), (x,y)$ are adjacent in $\Gamma_{\mathfrak{N}}(L_1\oplus L)$ if and only if $z$ and $x$ are adjacent in $\Gamma_{\mathfrak{N}}(L_1)$. Then, there is no path from $z$ to $x$ if and only if there is no path from $(z,w)$ to $(x,y)$, for all $w, y \in L$. This implies that $\kappa(L_1\oplus L) = \kappa(L_1)$.
\end{proof}

\par

\section{Strongly self-centralising subalgebras}

\begin{definition}
A subalgebra $U$ of $L$ is said to be strongly self-centralizing if $C_L(x) = U$ for all $0\neq x\in U$.
\end{definition}

\begin{example}
Let $L$ be the two-dimensional non-abelian Lie algebra. The $L$ has basis $\{x, y\}$ with $[x,y]=-[y,x]=y$. Put $U = Fx$. Then $U$ is a strongly self-centralising Lie subalgebra of $L$. 
\end{example}

\begin{example}
Let $L$ be three-dimensional with basis $\{e, f, g\}$ and products $[e,f] = 0, [e,g] = e, [f,g] = f$. Then $U := Fe + Ff$ is strongly self-centralising Lie subalgebra of $L$. 
\end{example}

\begin{example}
Let $L=\mathfrak{sl}_n(F_q)$ be the Lie algebra of $n\times n$ traceless matrices over the finite field $F_q$, and let $U$ be the subalgebra of all diagonal matrices in $\mathfrak{sl}_n(F_q)$, with $q= p^a$ and $p\nmid n$. Then $C_L(x)=U$ for all $0\neq x\in U$. 
\end{example}

\par

\begin{lemma}\label{omega}
 Let $U$ be a nilpotent subalgebra of $L$ such that $U \not \subseteq \nil(L)$. Then
 $U\setminus (U\cap \nil(L))$ is the set of vertices of a clique of $\Gamma_\mathfrak{N}(L)$. Moreover, if $N$ is a nilpotent ideal of $L$ and $L$ is finite, $|N\setminus (N\cap \nil(L))| \leq \omega(\Gamma_\mathfrak{N} (L))$. In
 particular, $|N(L) \setminus (N(L)\cap \nil(L))|\leq \omega(\Gamma_\mathfrak{N} (L))$.
\end{lemma}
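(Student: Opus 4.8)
The plan is to reduce the whole statement to the defining adjacency rule of $\Gamma_\mathfrak{N}(L)$ together with the standard fact that a subalgebra of a nilpotent Lie algebra is again nilpotent. The first assertion is precisely the clique lemma stated earlier in the excerpt, so I would either cite it directly or reprove it in one line: given distinct $x,y \in U\setminus(U\cap \nil(L))$, both are vertices of $\Gamma_\mathfrak{N}(L)$ since they lie outside $\nil(L)$, and the subalgebra $\langle x,y\rangle$ is contained in $U$. Because $U$ is nilpotent, say $U^k=0$, any subalgebra $V\subseteq U$ satisfies $V^k\subseteq U^k=0$, so $\langle x,y\rangle$ is nilpotent. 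By the definition of the nilpotent graph, $x$ and $y$ are adjacent; as the pair was arbitrary, $U\setminus(U\cap \nil(L))$ is a clique.

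For the \textbf{moreover} part I would apply the first assertion with $U=N$. If $N\subseteq \nil(L)$ then $N\setminus(N\cap \nil(L))=\emptyset$ and the inequality $0\leq \omega(\Gamma_\mathfrak{N}(L))$ holds trivially. Otherwise $N\not\subseteq \nil(L)$, so $N\setminus(N\cap \nil(L))$ is a nonempty clique of $\Gamma_\mathfrak{N}(L)$; since $L$ is finite this clique has finite cardinality, and by the very definition of the clique number $\omega$ the size of any clique is bounded above by $\omega(\Gamma_\mathfrak{N}(L))$. This yields $|N\setminus(N\cap \nil(L))|\leq \omega(\Gamma_\mathfrak{N}(L))$. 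The \textbf{in particular} clause is then immediate: the nilradical $N(L)$ is by definition the largest nilpotent ideal of $L$, hence a nilpotent ideal, so the previous paragraph applied to $N=N(L)$ gives the final inequality.

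I do not expect any genuine obstacle here, since every step is a direct invocation of a definition or of hereditary nilpotency for subalgebras. The only points requiring a word of care are the degenerate case $N\subseteq \nil(L)$, where the set is empty and the bound is vacuous, and the role of the finiteness hypothesis on $L$, which is what guarantees that the cliques involved are finite so that the cardinality comparison with $\omega(\Gamma_\mathfrak{N}(L))$ is meaningful.
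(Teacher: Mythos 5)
Your proposal is correct and follows essentially the same route as the paper: the clique property comes from hereditary nilpotency of subalgebras of a nilpotent algebra, the bound by $\omega(\Gamma_\mathfrak{N}(L))$ is just the definition of the clique number, and the final claim follows because $N(L)$ is a nilpotent ideal. Your explicit treatment of the degenerate case $N\subseteq \nil(L)$ is a small point of extra care that the paper glosses over, but it does not change the substance of the argument.
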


\begin{proof}
Since any subalgebra of a nilpotent Lie algebra is nilpotent, $\langle x,y\rangle$ is nilpotent for all $x, y \in U \setminus (U \cap \nil(L))$. Thus the subgraph of $\Gamma_\mathfrak{N} (L)$ having $U \setminus (U\cap \nil(L))$ as its set of vertices is a clique. Moreover, if $N$ is a nilpotent ideal of $L$, then
 \begin{align*}
 |N\setminus(N\cap \nil(L))| &\leq \max\{|U \setminus(U\cap \nil(L))| : U \hbox{ is a nilpotent ideal of } L\} \\
 &\leq \max\{|U \setminus(U\cap \nil(L))| : U \hbox{ is a nilpotent subalgebra of } L\} \\
 &\leq \omega(\Gamma_\mathfrak{N}(L)).
\end{align*}

The rest follows from the fact that the nilradical $N(L)$ is the largest  nilpotent subgroup of $L$.  
\end{proof}

\par

\begin{theorem}
\label{ProperCentralizer2}
 If there exists a strongly self-centralizing subalgebra $U$ of $L$, then:
 \begin{enumerate}
     \item $\nil_L(x) = U$ for all $0\neq x\in U$.
     
     \item $\Gamma_\mathfrak{N} (L)$ is a disconnected graph.

    \item  if $L$ is finite, $|\mathrm{N}(L)|-1\leq \omega(\Gamma_\mathfrak{N} (L))$.
 \end{enumerate}
\end{theorem}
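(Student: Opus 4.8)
The plan is to prove the three conclusions in sequence, since each builds on the structure forced by the existence of a strongly self-centralizing subalgebra $U$. The central observation I want to establish first is that for a nonzero $x \in U$, the nilpotentiser $\nil_L(x)$ coincides with $U$ itself, which makes $U \setminus \{0\}$ into an isolated clique-like piece of the graph.

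\medskip

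\textbf{Step 1 (conclusion (1)).} Fix $0 \neq x \in U$. First I would show $U \subseteq \nil_L(x)$: since $U$ is self-centralizing, $C_L(x) = U$, so in particular $[x,u] = 0$ for every $u \in U$, meaning $\langle x, u\rangle$ is abelian, hence nilpotent; thus $u \in \nil_L(x)$. For the reverse inclusion $\nil_L(x) \subseteq U$, take $y \in \nil_L(x)$, so $S = \langle x, y\rangle$ is nilpotent. The goal is to force $y \in C_L(x) = U$, i.e. $[x,y] = 0$. Here I would use Lemma \ref{z}: inside the nilpotent algebra $S$, the ideal generated by $x$ (or the subalgebra $S$ viewed with $x$) must meet the center, and more directly, in a nilpotent algebra $\ad x$ acts nilpotently, so $x \in Z^*(S)$-type arguments apply. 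The cleanest route is: since $S$ is nilpotent and generated by $x$ and $y$, the element $x$ lies in a nilpotent algebra where $\ad y$ is nilpotent on $S$; I expect to show $[x,y]=0$ by arguing that a nonzero $[x,y]$ would contradict $C_L(x) = U$ together with the fact that every element of $U$ centralizes $x$. This step is the main obstacle, because it requires squeezing the self-centralizing hypothesis against nilpotency of $\langle x, y\rangle$; the key lever is that $\ad x$ restricted to $S$ is nilpotent, so $y$ can be pushed into $C_L(x)$.

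\medskip

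\textbf{Step 2 (conclusion (2)).} Once (1) holds, disconnectedness follows combinatorially. For $0 \neq x \in U \setminus \nil(L)$ (note $U \not\subseteq \nil(L)$, as otherwise $U$ would centralize too much), the neighbours of $x$ in $\Gamma_\mathfrak{N}(L)$ are exactly the elements of $\nil_L(x) \setminus (\nil(L) \cup \{x\}) = U \setminus (\nil(L) \cup \{x\})$ by Lemma \ref{grado} and (1). Hence the connected component of $x$ is contained in $U$, so it cannot exhaust all of $L \setminus \nil(L)$ provided there exists a vertex outside $U$. Since $U$ is a proper subalgebra (a strongly self-centralizing subalgebra cannot be all of $L$, as $C_L(x) = L$ would force $L$ abelian, contradicting the existence of distinct centralizers), such a vertex exists, and therefore $\Gamma_\mathfrak{N}(L)$ is disconnected.

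\medskip

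\textbf{Step 3 (conclusion (3)).} This is a direct application of Lemma \ref{omega}. When $L$ is finite, that lemma gives $|N(L) \setminus (N(L) \cap \nil(L))| \leq \omega(\Gamma_\mathfrak{N}(L))$. To obtain the stated bound $|N(L)| - 1 \leq \omega(\Gamma_\mathfrak{N}(L))$, I would argue that $N(L) \cap \nil(L)$ is as small as possible, namely that the nilradical contributes all but at most one of its elements as clique vertices; concretely, since the zero element is the only element certain to lie in $\nil(L) \cap N(L)$ in the relevant extremal case, we get $|N(L) \setminus (N(L) \cap \nil(L))| = |N(L)| - 1$. I would note that this requires $N(L) \cap \nil(L) = \{0\}$ or a comparably small intersection, which should hold under the standing hypotheses and can be checked directly; then the inequality is immediate from Lemma \ref{omega}.
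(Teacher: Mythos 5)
Your Step 1 contains a genuine gap at exactly the point you flag as ``the main obstacle,'' and the route you sketch will not close it. You aim to show directly that $[x,y]=0$ for $y\in\nil_L(x)$ by exploiting nilpotency of $\ad\, x$ (or $\ad\, y$) on $S=\langle x,y\rangle$; but nilpotency of $S$ can never force its two generators to commute (any nonabelian nilpotent algebra, e.g.\ a Heisenberg algebra, shows this), so ``pushing $y$ into $C_L(x)$'' by that lever alone is impossible. The missing idea is to apply the strongly self-centralizing hypothesis \emph{twice}, using a nonzero central element of $S$ as a bridge: since $S$ is nilpotent, $Z(S)\neq 0$; pick $0\neq z\in Z(S)$. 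Then $[z,x]=0$ gives $z\in C_L(x)=U$, and now applying the hypothesis to $z$ rather than to $x$ gives $y\in C_L(z)=U$, because $y$ commutes with $z$. This is the paper's proof of (1); note that it concludes $y\in U$ without ever needing $[x,y]=0$ as an intermediate step. You gesture at Lemma \ref{z} and at the center of $S$, but you never make the second application of the hypothesis to $z$, which is the whole point.

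Step 3 has a parallel gap: you need $N(L)\cap\nil(L)=0$, and you only assert that this ``should hold under the standing hypotheses and can be checked directly.'' It does hold, but it requires an argument, and the argument again runs through part (1): $\nil(L)\subseteq\nil_L(x)=U$ for any $0\neq x\in U$, so a nonzero $w\in\nil(L)$ would lie in $U$ and part (1) would give $\nil_L(w)=U$, while $w\in\nil(L)$ means $\nil_L(w)=L$; hence $U=L$, so $C_L(x)=L$ for every nonzero $x$, making $L$ abelian and therefore nilpotent, contradicting the standing assumption that $L$ is non-nilpotent. The paper reaches the same conclusion by combining part (1) with Theorem \ref{z*} to get $\nil(L)=Z^*(L)=0$, after which Lemma \ref{omega} gives the bound exactly as you say. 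Your Step 2 is essentially the paper's argument (by (1), every neighbour of a vertex of $U$ lies in $U$, so components of $U$-vertices stay inside $U$, and $U\neq L$ because $U=L$ would force $L$ abelian, hence nilpotent); it is fine once Step 1 is repaired, but as written your proposal proves neither (1) nor (3).
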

\begin{proof}
(1) Let $0\neq x\in U$. Clearly, $U=C_L(x)\subseteq \nil_L(x)$. So let $y\in \nil_L(x)$. Then there exists $z\in Z(\langle x,y\rangle)$ and so $z\in C_L(x)=U$. It follows that $y\in C_L(z)=U$, whence $\nil_L(x)\subseteq U$ and equality results.

(2) As in (1), for every $y\in L\setminus U$, $y\notin \nil_L(x)$, so $\langle x,y\rangle$ is not nilpotent and there is no edge from $y$ to $x$.

(3) By part (1) and Theorem \ref{z*},  $\nil(L) = Z^*(L) = 0$. Therefore $|N(L)|-1 \leq \omega(\Gamma_\mathfrak{N} (L))$ (by Lemma \ref{omega}).
\end{proof}


Finally, we present the complement of the nilpotent graph and an example of it.  This complement condition immediately implies that such a graph is connected. 

\begin{definition}
Let $L$ be a finite-dimensional non-nilpotent Lie algebra. The non-nilpotent graph of $L$, denoted by $\Gamma_{\mathfrak{N}}(L)^c$, is a simple undirected graph whose vertex set is $L\setminus \nil(L)$, and two vertices $x$ and $y$ are adjacent if and only if $\langle x, y\rangle$ is a non-nilpotent subalgebra of $L$.    
\end{definition}

\begin{example}
\begin{enumerate}
    \item The Non-nilpotent graph of $\mathfrak{t}(2,\mathbb{F}_2)$ consists of one connected component
    \begin{figure}[H]
    \centering
    \begin{tikzpicture}
        \GraphInit[vstyle=Shade]
        \tikzset{VertexStyle/.append style={minimum size=1pt, inner sep=1pt}}
        \renewcommand*{\EdgeLineWidth}{0.5pt}
        \Vertex[x=1, y=0]{1}
        \Vertex[x=1 , y=2]{4}    
        \Vertex[x=2.5 , y=0]{6}  
        \Vertex[x=3 , y=1]{3} 
        \Vertex[x=2.5 , y=2]{2}   
        \Vertex[x=0.5 , y=1]{5}  
        \Edges(1,6)
        \Edges(1,5)
        \Edges(1,2)
        \Edges(6,3)
        \Edges(6,4)
        \Edges(5,4)
        \Edges(5,6)
        \Edges(5,2)
        \Edges(4,2)
        \Edges(3,4)
        \Edges(3,2)
        \Edges(3,1)
    \end{tikzpicture}
    \caption{Non-nilpotent Graph of $\mathfrak{t}(2,\mathbb{F}_2)$}
    \end{figure}
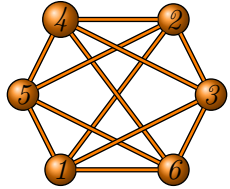
\end{enumerate}
\end{example}

\quad

\noindent \textbf{Questions.} 
\begin{itemize}
\item Is $\nil(L)$ always a subspace?
\item Does Theorem 3.4 (ii) hold over a field of prime characteristic?
\item Is the nilpotent graph of a finite-dimensional Lie algebra $L$ over $F_q$ bipartite if and only if $L$  is isomorphic to $\mathfrak{t}(2,\mathbb{F}_2)$?
\item What can we say about the classical invariants of the non-nilpotent group of a Lie algebra?
\end{itemize}

\appendix

\section{Algorithms}

\begin{algorithm}[H]
\caption{An algorithm to generate the nilpotent and non-nilpotent graphs of a finite  matrix Lie algebra $L$}
    \begin{algorithmic}
        \State $n \gets \text{Any positive integer}$
        \State $F \gets \text{Any finite Field}$
        \State $L \gets gap(f'MatrixLieAlgebra(\{F\},\{n\})')$
        \Statex
        \State $elements \gets L.\text{AsList}()$
        \State $Nilpotentizer \gets Nilpotentizer(L)$ 
        
        \State $nodes \gets [x \in elements \mid x \notin Nilpotentizer]$

        \Statex \# nilpotent and non-nilpotent graph respectively
        \State $G \gets \text{Graph}()$
        \State $H \gets \text{Graph}()$
        \State $G.\text{add\_vertices}(nodes)$
        \Statex
        
        \For{$i \gets 0$ to $\text{length}(nodes)$}
        
            \For{$j \gets i + 1$ to $\text{length}(nodes)$}
            
                \State $S \gets L.\text{Subalgebra}([nodes[i], nodes[j]])$
                
                \If{$S.\text{IsLieNilpotent}()$}
                
                    \State $G.\text{add\_edge}(nodes[i], nodes[j])$

                \Else

                    \State $H.\text{add\_edge}(nodes[i], nodes[j])$
                
                \EndIf
            \EndFor
        \EndFor
        \Statex \# Nilpotent Graph
        \State $G.\text{show}(\text{figsize}=[10, 10])$
        \Statex \# Non-Nilpotent Graph
        \State $H.\text{show}(\text{figsize}=[10, 10])$
        
    \end{algorithmic}
\end{algorithm}

\quad 

\bibliographystyle{plane}

\end{document}